\newcommand{\Cc}{\mathscr{C}}
\newcommand{\Dd}{\mathscr{D}}
\newcommand{\Ee}{\mathcal{E}}
\newcommand{\Gg}{\mathcal{G}}
\newcommand{\Hh}{\mathcal{H}}
\newcommand{\TP}{\mathcal{TP}}
\newcommand{\PW}{\mathcal{PW}}
\newcommand{\TW}{\mathcal{TW}}
\newcommand{\Ff}{\mathscr{F}}
\newcommand{\Ll}{\mathcal{L}}
\newcommand{\Tt}{\mathcal{T}}
\newcommand{\Pp}{\mathcal{P}}
\newcommand{\dist}{\mathrm{dist}}
\renewcommand{\phi}{\varphi}
\renewcommand{\FO}{{\rm FO}}
\newcommand{\MSO}{{\rm MSO}}
\newcommand{\cover}{\mathrel{\mathlarger{\vartriangleleft}}}
\newcommand{\rloc}[2]{\ensuremath{\mathcal{B}_{#1}^{#2}}}
\newtheorem{fact}{Observation}
\DeclareMathOperator{\join}{+}
\DeclareMathOperator{\comp}{\oplus}
\DeclareMathOperator{\union}{\cup}
\@nx\else[{#1}]\fi}% Replace this code
\@nx\else[{#1}]\fi\else\csname #2\@xa\endcsname\fi}% with this code
\newcommand{\blah}{proved in the appendix}
\title{Structural properties of the first-order transduction quasiorder} %TODO Please add
\titlerunning{Structural properties of the first-order transduction quasiorder} %TODO optional, please use if title is longer than one line
\author{Jaroslav Ne\v set\v ril}{Computer Science Institute of Charles University (IUUK), Praha, Czech Republic}{nesetril@iuuk.mff.cuni.cz}{https://orcid.org/0000-0002-5133-5586}{}
\author{Patrice Ossona de Mendez}{Centre d'Analyse et de Math\'ematiques Sociales (CNRS, UMR 8557),
Paris, France and\\
Computer Science Institute of Charles University,
  Praha, Czech Republic}{pom@ehess.fr}{https://orcid.org/0000-0003-0724-3729
}{}%TODO mandatory, please use full name; only 1 author per \author macro; first two parameters are mandatory, other parameters can be empty. Please provide at least the name of the affiliation and the country. The full address is optional
\author{Sebastian Siebertz}{University of Bremen, Bremen, Germany}{siebertz@uni-bremen.de}{https://orcid.org/0000-0002-6347-1198}{}
\authorrunning{J.\ Ne\v{s}e\v{r}il, P.\ Ossona de Mendez and S.\ Siebertz} %TODO mandatory. First: Use abbreviated first/middle names. Second (only in severe cases): Use first author plus 'et al.'
\keywords{Finite model theory, first-order transductions, structural graph theory} %TODO mandatory; please add comma-separated list of keywords
\begin{document}

\maketitle
%TODO mandatory: add short abstract of the document
\begin{abstract}
  Logical transductions provide a very useful tool to encode
  classes of structures inside other classes of structures.  In this
  paper we study first-order (FO) transductions and the quasiorder they 
  induce on infinite classes of finite graphs. 
  Surprisingly, this quasiorder is very complex, though shaped by 
  the locality properties of first-order logic. This contrasts with 
  the conjectured simplicity of the monadic second order (MSO) transduction
  quasiorder. 
  We first establish a local normal form for FO transductions, which is of independent interest.
  Then we prove that the quotient partial order is a bounded distributive join-semilattice, and that the subposet of \emph{additive} classes is also a bounded distributive join-semilattice.
 The FO transduction quasiorder has a great expressive power, and many well studied class properties can be defined using it.
  We apply these structural properties to prove, among other results, 
  that FO transductions of the class of paths are exactly perturbations of classes with bounded bandwidth, that the local variants of monadic stability and monadic dependence are equivalent to their (standard) non-local versions, and that the classes with pathwidth at most $k$, for $k\geq 1$ form a strict hierarchy in the FO transduction quasiorder.
 	\end{abstract}

\vspace{-2mm}
\section{Introduction and statement of results}

Transductions provide a model theoretical tool to encode relational
structures (or classes of relational structures) inside other 
(classes of) relational structures. Transductions 
naturally induce a quasiorder, that is, a reflexive and
transitive binary relation, on classes of relational structures. 
We study here the first-order~(\FO) and
monadic second-order~(\MSO) transduction quasiorders $\sqsubseteq_\FO$
and~$\sqsubseteq_\MSO$ on infinite classes of finite graphs. 
These quasiorders are very different and both
have a sound combinatorial and model theoretic relevance, as 
we will outline below. To foster the further discussion, let us
(slightly informally) introduce the concept of transductions. 
Formal definitions will be given in \cref{sec:prelims}. 

A \emph{transduction} $\mathsf T$ (on graphs) is the composition of a copying operation, a coloring operation, and a simple interpretation. The \emph{copying operation} $\mathsf C_k$ maps a graph $G$ to the graph~$\mathsf C_k(G)$ obtained 
	by taking $k$ disjoint copies of $G$ and making all the copies of a single vertex adjacent; the \emph{coloring operation} maps a graph $G$ to the set $\Gamma(G)$ of all possible colorings of~$G$; a \emph{simple interpretation} $\mathsf I$ maps a colored graph $G^+$ to a graph $H$, whose vertex set (resp.\ edge set) is a  definable subset of $V(G^+)$ (resp.\ of $V(G^+)\times V(G^+)$).
In this way, the transduction $\mathsf T$ maps a graph $G$ to a set 
$\mathsf T(G)$ of graphs defined as $\mathsf T(G)\coloneqq \mathsf I\circ\Gamma\circ\mathsf C_k(G)=\{\mathsf I(H^+): H^+\in\Gamma(\mathsf C_k(G))\}$.
This naturally extends to a graph class $\Cc$ by $\mathsf T(\Cc)\coloneqq \bigcup_{G\in\Cc}\mathsf T(G)$.

We say that a class $\Cc$ \emph{is a transduction of} a class $\Dd$ if there exists a transduction $\mathsf T$ with $\Cc\subseteq\mathsf T(\Dd)$, and we denote this by $\Cc\sqsubseteq\Dd$. We write  $\Cc\equiv\Dd$
for~$\Cc\sqsubseteq\Dd$ and~$\Dd\sqsubseteq\Cc$, $\Cc\sqsubset\Dd$ for
$\Cc\sqsubseteq\Dd$ and $\Cc\not\equiv\Dd$, and $\Cc\cover\Dd$
for the property that $(\Cc,\Dd)$ is a \emph{cover}, that is,
that~$\Cc\sqsubset\Dd$ and there is no class $\Ff$ with
$\Cc\sqsubset\Ff\sqsubset\Dd$. For a logic $\Ll$ we write 
$\Cc\sqsubseteq_\Ll\Dd$ to stress that the simple interpretation of 
the transduction uses $\Ll$-formulas. 
 
For most commonly studied logics $\Ll$ transductions compose and in this
case $\sqsubseteq_\Ll$ is a quasiorder. We study here mainly the first-order~(\FO) and
monadic second-order~(\MSO) transduction quasiorders $\sqsubseteq_\FO$
and~$\sqsubseteq_\MSO$.  
%These quasiorders are very different and
%have a sound combinatorial and model theoretic relevance.  
As with the colorings all vertex subsets become definable, it 
follows that we can restrict our attention to
infinite \emph{hereditary} classes, that is, infinite classes that are
closed under taking induced subgraphs.

{\MSO} transductions are basically understood.  Let us write $\Ee$ for
the class of edgeless graphs, $\Tt_n$~for the class of forests of
depth $n$ (where the depth of a (rooted) tree is the maximum number of 
vertices on a root-leaf path, hence $\Tt_1=\Ee$), 
$\Pp$~for the class of all paths, $\Tt$ for the class of
all trees and $\Gg$ for the class of all graphs. The {\MSO} transduction
quasiorder is conjectured to be simply the chain
$\Ee\cover_\MSO
\Tt_2\cover_\MSO\ldots\cover_\MSO\Tt_n\cover_\MSO\ldots\sqsubseteq_\MSO\Pp\cover_\MSO\Tt\cover_\MSO~\Gg$~\cite{blumensath10}.

In a combinatorial setting this hierarchy has a very concrete meaning
and it was investigated using the following notions: a class $\Cc$ has
\emph{bounded shrubdepth} if $\Cc\sqsubseteq_\MSO \Tt_n$ for some $n$;
$\Cc$ has \emph{bounded linear cliquewidth} if
$\Cc\sqsubseteq_\MSO \Pp$; $\Cc$ has \emph{bounded cliquewidth}
if~$\Cc\sqsubseteq_\MSO \Tt$. These definitions very nicely illustrate
the treelike structure of graphs from the above mentioned classes from
a logical point of view, which is combinatorially captured by the
existence of treelike decompositions with certain properties. It is
still open whether the {\MSO} transduction quasiorder is as shown above
\cite[Open Problem 9.3]{blumensath10}, though the initial fragment
$\Ee\cover_\MSO
\Tt_2\cover_\MSO\Tt_3\cover_\MSO\ldots\cover_\MSO\Tt_n$ has been
proved to be as stated in~\cite{Ganian2017}. Thus we are essentially left
with the following three questions: Does $\Cc\sqsubset_\MSO\Pp$ imply
$(\exists n)\ \Cc\sqsubseteq_\MSO\Tt_n$? This is equivalent to the
question whether one can transduce with {\MSO} arbitrary long paths from
any class of unbounded shrubdepth (see \cite{kwon19} for a proof of
the CMSO version). Is the pair $(\Pp,\Tt)$ a cover? Is the pair
$(\Tt,\Gg)$ a cover? This last question is related to a famous
conjecture of Seese~\cite{seese1991structure} and the CMSO version has
been proved in~\cite{courcelle2007vertex}.

%The above hierarchy is related to the complexity of the model-checking
%problem for {\MSO}. Every {\MSO} property can be tested in cubic time on
%classes with bounded cliquewidth~\cite{courcelle2000linear}, while on
%the class $\Gg$ of all graphs there are formulas for which
%model-checking is \NP-hard. Moreover, the dependency on the quantifier
%rank of the formula is double exponential on classes with bounded
%shrubdepth~\cite{Ganian2017}, while on $\Pp$ it is
%non-elementary~\cite{lampis2013model} (under standard assumptions from
%complexity theory).

\smallskip
As in the {\MSO} case, the {\FO} transduction quasiorder
allows to draw important algorithmic and structural dividing
lines. For instance {\MSO} collapses to {\FO} on classes of bounded
shrubdepth~\cite{Ganian2017}. Classes of bounded shrubdepth are
also characterized has being {\FO} transductions of classes
of trees of bounded depth~\cite{Ganian2012}.
{\FO} transductions give alternative
characterizations of other graph class properties mentioned above:
%\item 
a class $\Cc$ has bounded linear cliquewidth if and only if
  $\Cc\sqsubseteq_\FO\Hh$, where $\Hh$ denotes the class of
  \emph{half-graphs} (bipartite graphs with vertex set
  $\{a_1,\ldots, a_n\}\, \cup\, \{b_1,\ldots, b_n\}$ and edge set
  $\{a_ib_j~:~1\leq i\leq j\leq n\}$ for some~$n$)~\cite{colcombet2007combinatorial}, and
%\item 
bounded cliquewidth if and only if $\Cc\sqsubseteq_\FO\TP$,
  where $\TP$ denotes the class of \emph{trivially perfect graphs}
  (comparability graphs of rooted trees)
  \cite{colcombet2007combinatorial}.
%\end{itemize}
%
Also, it follows from ~\cite{baldwin1985second} that {\FO} transductions allow to give an alternative characterizations
of classical model theoretical properties:
A class $\Cc$ is \emph{monadically
  stable} if $\Cc\not\sqsupseteq_\FO \Hh$ %(\cref{cor:MS}) 
  and \emph{monadically
  dependent} if~$\Cc\not\sqsupseteq_\FO \Gg$. % (\cref{cor:MNIP}).  
%\vspace{-2mm}
%\begin{itemize}
%\item \emph{monadically stable} if $\Cc\not\sqsupseteq_\FO \Hh$ and 
%\item \emph{monadically dependent} if $\Cc\not\sqsupseteq_\FO \Gg$.
%\end{itemize}
%
We further call a class $\Cc$ \emph{monadically straight} if
$\Cc\not\sqsupseteq_\FO \TP$. To the best of our knowledge this
property has not been studied in the literature but seems to play a
key role in the study of {\FO} transductions.

The {\FO} transduction quasiorder has not been studied in detail previously 
and it turns out that it is much more complicated than the {\MSO}
transduction quasiorder. This is outlined in \cref{fig:hierarchy},
and it is the goal of this paper to explore this quasiorder.

\begin{figure}[t]
\hfill
\includegraphics[width=.8\textwidth]{}
\hfill\hfill
\caption{Partial outline of the {\FO} transduction quasiorder. The
  special subdivided binary trees are those subdivisions of binary
  trees that are subgraphs of the grid.  Dashed boxes correspond to
  families of not necessarily transduction equivalent graph classes
  sharing a common property. Fat lines correspond to covers, normal
  lines correspond to strict containment $\sqsubset$, dotted lines correspond to
  containment (with a possible collapse).}
\label{fig:hierarchy}
\end{figure}
We are motivated by three aspects of the $\sqsubseteq_\FO$
quasiorder that have been specifically considered in the past and
appeared to be highly non-trivial.  
%The first aspect concerns the
%antichains formed by the class of paths and classes with bounded
%shrubdepth. %It is a strengthening of
%%a conjecture we already mentioned. 
The first aspect is the conjectured property that every class that cannot {\FO} transduce paths has bounded shrubdepth (hence is an {\FO] transduction of a class of bounded height trees).
%
%\begin{conjecture}
%  A class $\Cc$ has bounded shrubdepth if and only if
%  $\Cc\not\sqsupseteq_\FO \Pp$.
%\end{conjecture} 
%This conjecture may be seen to be equivalent to the following one.
%\begin{conjecture}
%  The pairs $\{\Tt_n,\Pp\}$ with $n>1$ are all the maximal antichains
%  containing $\Pp$.
%\end{conjecture}
%\begin{proof}[Proof of equivalence of the conjectures]
%  Assume that a class $\Cc$ has bounded shrubdepth if and only if
%	 $\Cc\not\sqsupseteq_\FO\Pp$. This implies that if $\Cc$ is
%	 incomparable with $\Pp$, then $\Cc\sqsubseteq_\FO\Tt_m$ for
%	 some $m$. Then the class $\Cc$ is MSO-transduction equivalent
%	 to some class $\Tt_k$ by \cite{Ganian2017}.
%  Then~$\Cc\equiv_\FO\Tt_k$, as MSO and FO collapse on classes with
%	 bounded shrubdepth~\cite{Ganian2017}.
%	
%  Conversely, assume that the pairs $\{\Tt_n,\Pp\}$ with $n>1$ are
%	 all the maximal antichains containing $\Pp$, and let
%	 $\Cc\not\sqsupseteq_\FO\Pp$. Then either $\Cc\equiv_\FO\Ee$,
%	 or $\{\Cc,\Pp\}$ is an antichain, thus $\Cc$ is one of the
%	 classes $\Tt_n$ with $n>1$.
%\end{proof}
%
%Note that the maximal antichains play a key role in the theory of
%finite dualities (and equivalently, {\FO} definable CSP classes, see
%e.g.~\cite{NT}).
The second aspect that was studied in detail concerns the chain formed
by classes with bounded pathwidth, which is eventually covered by the
class of half-graphs. This is related to the fact that 
%the following result, which
%we reformulate here in our setting. 
%
%\begin{theorem}[\cite{SODA_msrw, msrw}]
  in the {\FO} transduction quasiorder there is no class between
  the classes with bounded pathwidth and the class $\Hh$ of half-graphs~\cite{SODA_msrw, msrw}.
%  Formally, if $\Cc\sqsubset_\FO\Hh$ (that is,
%  $\Cc$ is a monadically stable class with bounded linear cliquewidth),
%  then $\Cc\sqsubseteq_\FO\PW_n$ for some~$n$, where $\PW_n$ denotes
%  the class of graphs with pathwidth at most $n$.
%\end{theorem}
The third aspect concerns the chain of classes with bounded treewidth,
which is eventually covered by the class of trivially perfect
graphs. This is related to the fact that %following result.
%
%\begin{theorem}[\cite{nevsetril2020rankwidth}]
%\label{thm:msrw}
if $\Hh\not\sqsubseteq_\FO\Cc\sqsubseteq_\FO\TP$ (that is, $\Cc$ is
a monadically stable class with bounded cliquewidth), then
$\Cc\sqsubseteq_\FO\TW_n$ for some $n$, where $\TW_n$ denotes the
class of graphs with treewidth at most~$n$~\cite{nevsetril2020rankwidth}.
%\end{theorem}

\smallskip

In this paper, we establish the three kinds of results and show that despite its complexity the FO transduction
 quasiorder is strongly structured.
% a new normal 
%form for {\FO} transductions, structural properties of the transduction
%quasiorder, and some more concrete properties of the transduction 
%quasiorder in particular on classes with bounded pathwidth.

\paragraph*{A local normal form for transductions}
In \cref{sec:norm} we introduce a normal form for {\FO} transductions
that captures the local character of first-order logic, by proving
that every {\FO} transduction can be written as the composition
of a copying operation, a transduction that connects only vertices
at a bounded distance, and a perturbation, which is a sequence of
subset complementations (\cref{thm:normal}). 
In \cref{sec:applications-nf} we give two applications of this 
normal form. We first 
characterize the equivalence class of the class of paths in the 
{\FO} transduction quasiorder (\cref{thm:transpath}). Then, 
we prove that the local versions of monadic stability, 
monadic straightness, and monadic dependence are 
equivalent to the non-local versions (\cref{thm:local}). 
This result is of independent interest and may be relevant
e.g.\ for locality based FO model-checking on these
classes. 

\paragraph*{Structural properties of the transduction quasiorder}
In \cref{sec:structural-prop} we prove that the partial orders
obtained  as the quotient of the transduction quasiorder and the 
non-copying transduction quasiorder are bounded distributive 
join-semilattices (\cref{thm:sl}) and discuss some of their properties. 
In particular we prove that every class
closed under disjoint union is join-irreducible. 
Recall that a partial order $(X,\leq)$
is a \emph{join semi-lattice} if for all $x,y\in X$ there exists
a least upper bound $x\vee y$ of $\{x,y\}$, called the join of $x$ and $y$. 
It is \emph{distributive} if, for all $a,b,x\in X$ with $x\leq a\vee b$ there exist $x_1\leq a$, $x_2\leq b$, with $x=x_1\vee x_2$.
An element $x\in X$  is \emph{join-irreducible} if $x$ is not the join of two incomparable elements. 
Then we consider the subposets induced by \emph{additive} classes, which are the classes equivalent to the class of disjoint unions of pairs of graphs in the class. 
We prove that these subposets are also 
bounded distributive join-semilattices (\cref{thm:slA}), but with a 
different join.  We discuss
some properties of these subposets and in particular prove that 
every class closed under disjoint union and 
equivalent to its subclass of connected graphs is join-irreducible. 

\paragraph*{The transduction quasiorder on some classes}
In \cref{sec:special} we focus on the transduction quasiorders
on the class of paths, the class of trees, 
classes of bounded height trees, classes with bounded
pathwidth, classes with bounded treewidth, and derivatives. 
In particular we prove that classes with bounded pathwidth 
form a strict hierarchy (\cref{thm:PW}). 
%\cref{thm:PW} states in particular that the pathwidth hierarchy is strict. 
This
result  was the main motivation for this study, and we conjecture
that a similar statement holds with treewidth. This would be a
consequence of the conjecture that the class of all graphs with treewidth at most $n$ is
  incomparable with the class of all graphs with pathwidth at most $n+1$, for every positive integer $n$.

\section{Preliminaries and basic properties of transductions}\label{sec:prelims}
We assume familiarity with first-order logic and graph theory and
refer e.g.\ to \cite{diestel2012graph,hodges1993model} for background
and for all undefined notation.  The vertex set of a graph $G$ is
denoted as~$V(G)$ and its edge set $E(G)$. The {\em complement} of a
graph $G$ is the graph $\overline{G}$ with the same vertex set, in
which two vertices are adjacent if they are not adjacent in $G$. The
disjoint union of two graphs $G$ and $H$ is denoted as $G\union H$, and
their {\em complete join} $\overline{\overline{G}\union \overline{H}}$ as
$G\join H$. We denote by $K_t$ the complete graph on $t$
vertices. Hence, $G\join K_1$ is obtained from $G$ by adding a 
new vertex, called an \emph{apex}, that is connected to all vertices of 
$G$. For a class $\Cc$ of graphs we denote by $\Cc\join K_1$ the 
class obtained from $\Cc$ by adding an apex to each graph of $\Cc$.
The {\em lexicographic product} $G\bullet H$ of two
graphs $G$ and $H$ is the graph with vertex set $V(G)\times V(H)$, in
which~$(u,v)$ is adjacent to $(u',v')$ if either $u$ is adjacent 
to~$u'$ in~$G$ or~$u=u'$ and~$v$ is adjacent to~$v'$ in $H$.  The {\em
  pathwidth} ${\rm pw}(G)$ of a graph $G$ is equal to one less than
the smallest clique number of an interval graph that contains $G$ as a
subgraph, that is,
${\rm pw}(G)=\min\{\omega(H)-1: \text{for an interval graph }H\text{
  with }H\supseteq G\}$. The {\em treewidth} ${\rm tw}(G)$ of a
graph~$G$ is equal to one less than the smallest clique number of a
chordal graph that contains~$G$ as a subgraph, that is,
\mbox{${\rm tw}(G)=\min\{\omega(H)-1: \text{for a chordal graph
  }H\text{ with }H\supseteq G\}$}.  We write $G^k$ for the {\em $k$-th
  power} of $G$ (which has the same vertex set as $G$ and two vertices
are connected if their distance is at most $k$ in $G$).  The {\em
  bandwidth} of a graph $G$ is
${\rm bw}(G)=\min\{\ell:\text{for }P\in\Pp\text{ with }P^\ell\supseteq
G, \}$.

% We consider finite signatures $\Sigma$
% consisting of one binary relation symbol $E$, representing the edge
% relation, and a finite number of unary relation symbols,
% representing colors. A $\Sigma$-structure will be called a
% $\Sigma$-colored graph. We will usually simply speak of graphs
% $H,G,\ldots$ and will not explicitly mention the colors.
In this paper we consider either graphs or {\em $\Sigma$-expanded
  graphs}, that is, graphs with additional unary relations in $\Sigma$
(for a set $\Sigma$ of unary relation symbols). We usually denote
graphs by $G,H,\ldots$ and $\Sigma$-expanded graphs by
$G^+,H^+,G^*,H^*,\ldots$, but sometimes we will use $G,H,\ldots$ for
$\Sigma$-expanded graphs as well. We shall often use the term ``colored graph'' instead of $\Sigma$-expanded graph.
 In formulas, the adjacency relation
will be denoted as $E(x,y)$. For each non-negative integer $r$ we can
write a formula $\delta_{\leq r}(x,y)$ such that for every graph~$G$
and all $u,v\in V(G)$ we have $G\models\delta_{\leq r}(u,v)$ if and
only if the distance between~$u$ and~$v$ in $G$ is at most $r$. For
improved readability we write $\dist(x,y)\leq r$
for~$\delta_{\leq r}(x,y)$.  For $U\subseteq V(G)$ we write $B_r^G(U)$
for the subgraph of $G$ induced by the union of the closed
$r$-neighborhoods of the vertices in $U$. We write~$N^G(v)$ for the
open neighborhood of $v$ (as a set of vertices).  For the sake of
simplicity we use for balls of radius $r$ the notation $B_r^G(v)$ instead of $B_r^G(\{v\})$
and, if~$G$ is clear from the context, we drop the superscript
$G$. For a class~$\Cc$ and an integer $r$, we denote by
$\rloc{r}{\Cc}$ the class of all the balls of radius $r$ of graphs in~$\Cc$: $ \rloc{r}{\Cc}=\{B_r^G(v)\mid G\in\Cc\text{ and }v\in V(G)\}$.
%
% If we explicitly want to stress that colors are added to a graph
% $G$, we will write $G^+$, and vice versa, if $G^+$ is a colored
% graph we will write $G$ for the reduct of $G^+$ obtained by dropping
% all unary relations. If $G$ is a graph, we write $\Sigma_G$ for the
% signature consisting of the binary symbol $E$ and all unary symbols
% for unary relations in $G$.
For a formula $\phi(x_1,\ldots, x_k)$ and a graph (or a
$\Sigma$-expanded graph) $G$ we define
\[\phi(G)\coloneqq \{(v_1,\ldots, v_k)\in
V(G)^k~:~G\models\phi(v_1,\ldots, v_k)\}.\]
%\begin{definition}

%\end{definition}
%
%For each simple interpretation $\mathsf{I}$ in $\Sigma$-colored
% graphs there exists an associated mapping~$\mathsf{I}^*$ that maps
% formulas to formulas such that for every formula
% $\phi(x_1,\ldots, x_k)$, every $\Sigma$-colored graph $G$ and all
% $w_1,\ldots, w_k$ in the domain of $\mathsf{I}(G)$ we have
%\[\mathsf{I}(G)\models \phi(w_1,\ldots, w_k)\quad \Longleftrightarrow\quad
%G\models \mathsf{I}^*(\phi)(w_1,\ldots, w_k).\]

% To each simple interpretation $\mathsf I$ is associated a mapping
% $\mathsf I^*$ (which defines a Boolean algebra morphism from the
% Lindenbaum algebra of formulas on $\Sigma'$-structures to the
% Lindenbaum algebra of formulas on $\Sigma$-structures) such that for
% every $\Sigma$-structure $\strA$, every formula
% $\phi(x_1,\dots,x_k)$ in the language of $\Sigma'$-structures, and
% every $v_1,\dots,v_k$ in the domain of $\mathsf I(\strA)$ we have
%\[
% \mathsf I(\strA)\models\phi(v_1,\dots,v_k)\quad\iff\quad
% \strA\models\mathsf I^*(\phi)(v_1,\dots,v_k).
%\]
%
%
%\begin{definition}
%\label{def:trans}
%\begin{trivlist}
%\item 

For a positive integer $k$, the \emph{$k$-copy operation}
$\mathsf{C}_k$ maps a graph $G$ to the graph~$\mathsf{C}_k(G)$
consisting of $k$ copies of $G$ where the copies of each vertex are
made adjacent (that is, the copies of each vertex induce a clique and
there are no other edges between the copies of $G$). Note that for $k=1$, $\mathsf C_k$ maps each graph $G$ to itself. (Thus $\mathsf C_1$ is the identity mapping.)

For a set $\Sigma$ of unary relations, the \emph{coloring operation} $\Gamma_\Sigma$ maps a graph $G$ to the set $\Gamma_\Sigma(G)$ of all its $\Sigma$-expansions.

A \emph{simple  interpretation} $\mathsf I$ of graphs in \mbox{$\Sigma$-expanded}
graphs is a pair $(\nu(x), \eta(x,y))$ consisting of two formulas (in
the language of $\Sigma$-expanded graphs), where $\eta$ is symmetric
and anti-reflexive (i.e.
\mbox{$\models \eta(x,y)\leftrightarrow\eta(y,x)$} and
\mbox{$\models \eta(x,y)\rightarrow\neg(x=y)$}). If $G^+$ is a
$\Sigma$-expanded graph, then $H=\mathsf I(G^+)$ is the graph with
vertex set $V(H)=\nu(G^+)$ and edge set $E(H)=\eta(G)\cap \nu(G)^2$.

A \emph{transduction} $\mathsf T$ is the composition
$\mathsf I\circ\Gamma_\Sigma\circ \mathsf C_k$ of a copy operation $\mathsf C_k$, a coloring operation~$\Gamma_\Sigma$, and a simple interpretation $\mathsf I$ of graphs in $\Sigma$-expanded graphs.
In other words, for every graph $G$ we have
$\mathsf T(G)=\{\mathsf I(H^+): H\in\Gamma_\Sigma(\mathsf C_k(G))\}$.
A transduction $\mathsf T$ is \emph{non-copying} if it is the composition of a coloring operation and a simple interpretation, that is if it can written as 
$\mathsf I\circ\Gamma_\Sigma\circ \mathsf C_1$ ($=\mathsf I\circ\Gamma_\Sigma$).
We say that a
transduction $\mathsf T'$ \emph{subsumes} a transduction $\mathsf T$
if for every graph $G$ we have
$\mathsf T'(G)\supseteq\mathsf T(G)$. We denote by
$\mathsf T'\geq\mathsf T$ the property that $\mathsf T'$ subsumes
$\mathsf T$.

For a class $\Dd$ and a transduction $\mathsf T$ we define $\mathsf T(\Dd)=\bigcup_{G\in\Dd}\mathsf T(G)$ and 
we say that a class~$\Cc$ is a  {\em $\mathsf T$-transduction} of $\Dd$ if $\Cc\subseteq\mathsf T(\Dd)$. We also say that $\mathsf{T}$ \emph{encodes} $\Cc$ in $\Dd$. 
A class~$\Cc$ of graphs is a \emph{(non-copying) transduction} of a class $\Dd$ of
graphs if it is a $\mathsf T$-transduction of~$\Dd$ for some (non-copying) transduction $\mathsf T$.
We denote by $\Cc\sqsubseteq_\FO\Dd$ (resp. $\Cc\sqsubseteq^{\circ}_\FO\Dd$) the property that the class $\Cc$ is an FO transduction (resp. a non-copying FO transduction) of the class $\Dd$.
It is easily checked that the composition of two (non-copying) transductions is a (non-copying) transduction (see, for instance \cite{SBE_drops}). Thus the relations
$\Cc\sqsubseteq_\FO\Dd$ and 
$\Cc\sqsubseteq^{\circ}_\FO\Dd$ are quasiorders
on classes of graphs
Intuitively, if $\Cc\sqsubseteq_\FO \Dd$, then $\Cc$ is at most as complex as $\Dd$.
Equivalences for $\sqsubseteq_\FO$ and $\sqsubseteq_\FO^0$ are defined naturally.

We say that a class $\Cc$ \emph{does not need copying} if for every
integer $k$ the class $\mathsf{C}_k(\Cc)$ is a non-copying
transduction of $\Cc$.   %In particular, it is easily checked that if
%$\Cc$ is closed under adding pendant vertices (that is, if $G\in \Cc$
%and $v\in V(G)$, then $G'$, which is obtained from~$G$ by adding a new
%vertex adjacent only to $v$, is also in $\Cc$) then~$\Cc$ does not
%need copying. In \cref{fig:hierarchy} only the classes $\Ee$ and
%$\Tt_n$ need copying.
%\pom{This is not true.}

We take time for some observations.

\begin{fact}
	If $\Cc$ does not
need copying and $\Cc\equiv_\FO\Dd$, then $\Dd$ does not need
copying.
\end{fact}
\begin{proof}
	This follows from the fact that every class $\Cc$ is  a non-copying
transduction of~$\mathsf C_k(\Cc)$. 
\end{proof}

\begin{fact}
	A class $\Cc$ does not need copying if and only if $\mathsf C_2(\Cc)\equiv_\FO^\circ \Cc$.
\end{fact}
\begin{proof}
It is easily checked that for every positive integer $k$ there is a non-copying transduction~$\mathsf T_k$ such that $\mathsf T_k\circ\mathsf C_k\circ\mathsf C_2$ subsumes $\mathsf C_{2k}$. 
Assume $\mathsf C_2(\Cc)\equiv_\FO^\circ \Cc$.
Then if $\mathsf C_k(\Cc)\sqsubseteq_\FO^\circ \Cc$ we deduce from $\mathsf C_2(\Cc)\sqsubseteq_\FO^\circ \Cc$ that $\mathsf C_{2k}(\Cc)\sqsubseteq_\FO^\circ \Cc$. By induction we get $\mathsf C_k(\Cc)\equiv_\FO^\circ \Cc$ for every positive integer $k$.
\end{proof}

% Thus the property of being easy is a property of the equivalence
% classes of the FO transduction quasiorder and we shall say that
% $[\Cc]_\FO$ is easy if $\Cc$ is easy.
%
%We take time for two easy observations.
%\cref{fact:copyfirst} and justifies that we shall mainly study non-copying transductions in this paper. 
\begin{fact}\label{fact:non-copy}
  A class $\Dd$ does not need copying if and only if for every class $\Cc$ we have
  $\Cc\sqsubseteq_\FO\Dd$ if and only if
  $\Cc\sqsubseteq_\FO^\circ\Dd$.
\end{fact}

%The next observation directly implies that many classes in \cref{fig:hierarchy} do not need copying.
\begin{fact}\label{fact:non-copy-classes}
 If a class $\Cc$ is closed under adding pendant vertices (that is, if $G\in \Cc$
and $v\in V(G)$, then $G'$, which is obtained from~$G$ by adding a new
vertex adjacent only to $v$, is also in $\Cc$) then $\Cc$ does not need copying.
\end{fact}

%
%\begin{definition}
A {\em subset complementation} transduction is defined by the
quantifier-free interpretation on a $\Sigma$-expansion (with
$\Sigma=\{M\}$) by
$\eta(x,y):=\neg \bigl(E(x,y)\leftrightarrow(M(x)\wedge M(y)\bigr)$.
In other words, the subset complementation transduction complements
the adjacency inside the subset of the vertex set defined by $M$. We
denote by $\comp M$ the subset complementation defined by the unary
relation $M$. A {\em perturbation} is a composition of (a bounded
number of) subset complementations.
Let $r$ be a non-negative integer.  A formula $\phi(x_1,\ldots, x_k)$
is \emph{$r$-local} if for every ($\Sigma$-expanded) graph $G$ and all
$v_1,\ldots, v_k\in V(G)$ we have
$G\models\phi(v_1,\ldots, v_k) \iff %\quad\Longleftrightarrow\quad
  B_r^G(\{v_1,\ldots, v_k\})\models\phi(v_1,\ldots, v_k)$.
% The formula $\phi(x_1,\ldots, x_k)$ is \emph{strongly $r$-local} if
% it is $r$-local and we have
% $\models\phi(x_1,\ldots,x_k)\rightarrow \dist(x_i,x_j)\leq r$ for
% all $1\leq i<j\leq k$ (see \cite{Loclim}).
An $r$-local formula $\phi(x_1,\ldots, x_k)$ is \emph{strongly
  $r$-local} if
$\models\phi(x_1,\ldots,x_k)\rightarrow \dist(x_i,x_j)\leq r$ for all
$1\leq i<j\leq k$ (see \cite{Loclim}).

\begin{lemma}[Gaifman's Locality Theorem~\cite{gaifman1982local}]\label{lem:gaifman}
  Every formula $\phi(x_1,\ldots, x_m)$ is equivalent to a Boolean
  combination of $t$-local formulas %$\psi(x_1,\ldots, x_k)$
  and so-called \emph{basic local sentences} of the form
\[\exists x_1\ldots\exists x_k \big(\bigwedge_{1\leq i\leq k}\chi(x_i)
\wedge\bigwedge_{1\leq i<j\leq k}\dist(x_i,x_j)>2r\quad 
\text{(where $\chi$ is $r$-local).}\] 
Furthermore, if the quantifier-rank of $\phi$ is $q$, then 
$r\leq 7^{q-1}$, $t\leq 7^{q-1}/2$, and $k\leq q+m$. 
\end{lemma}

We call a transduction $\mathsf T$ {\em immersive} if it is
non-copying and the formulas in the interpretation associated to
$\mathsf T$ are strongly local.

\section{Local properties of FO transductions}
\subsection{A local normal form}
\label{sec:norm}
We now establish a normal form for first-order transductions that
captures the local character of first-order logic and further
study the properties of immersive transductions. The normal form is
based on Gaifman's Locality Theorem and uses only \emph{strongly local
  formulas}, while the basic-local sentences are handled by subset
complementations. This normal form will be one of the main tools to 
establish results in the paper.  

%\newcounter{temp}
%\setcounter{temp}{\thetheorem}
%\setcounter{theorem}{10}
\begin{theorem}
\label{thm:normal}
Every non-copying transduction $\mathsf T$ is subsumed by the composition of an immersive transduction $\mathsf T_{\rm imm}$ and a perturbation~$\mathsf P$, that is $\mathsf T\leq \mathsf P\circ\mathsf T_{\rm imm}$.

Consequently, every transduction $\mathsf T$ is subsumed by the composition of a
copying operation~$\mathsf C$, an immersive transduction $\mathsf T_{\rm imm}$ and a perturbation~$\mathsf P$, that is $\mathsf T\leq \mathsf P\circ\mathsf T_{\rm imm}\circ\mathsf C$.
\end{theorem}
%\setcounter{theorem}{\thetemp}

%The normal form (\cref{thm:normal})
%states that every transduction $\mathsf T$ is subsumed by the composition of a copying operation $\mathsf C$, an immersive transduction $\mathsf T_{\rm imm}$ and a perturbation~$\mathsf P$, that is $\mathsf T\leq \mathsf P\circ\mathsf T_{\rm imm}\circ\mathsf C$,
%and that the copy operation can be omitted if $\mathsf T$ is non-copying.

%\begin{theorem}
%\label{thm:normal}
%Every transduction $\mathsf{T}$ is subsumed by the composition of a
%strongly local transduction and a transduction performing a sequence of subset complementations.
%\end{theorem}

\begin{proof}%[Proof of \cref{thm:normal}]
	Let $\mathsf T=\mathsf I_{\mathsf T}\circ\Gamma_{\Sigma_{\mathsf T}}$ be a non-copying transduction.
	Without loss of generality, we may assume that the interpretation $\mathsf I_{\mathsf T}$ defines the  domain directly from the $\Sigma_{\mathsf T}$-expansion. Then the only non-trivial part of the interpretation is the adjacency relation, which is defined by a symmetric and anti-reflexive formula $\eta(x,y)$.	
	We shall prove that the transduction $\mathsf T$ is subsumed by the composition of an immersive transduction $\mathsf T_\psi$ and a perturbation $\mathsf P$.
	
	We define $\Sigma_{\mathsf T_\psi}$ as the disjoint union of $\Sigma_{\mathsf T}$ and a set $\Sigma_\psi=\{T_i\mid 1\leq i\leq n_1\}$ for some integer $n_1$ we shall specify later and 
	let $\Sigma_{\mathsf P}=\{Z_j\mid 1\leq j\leq n_2\}$ for some integer $n_2$ we shall also specify later.
  Let $q$ be the quantifier rank of $\eta(x,y)$.  According to
  \cref{lem:gaifman}, $\eta$ is logically equivalent to a formula in 
  Gaifman normal form, that is, to a Boolean
  combination of $t$-local formulas and
  basic-local sentences~$\theta_1,\dots,\theta_{n_1}$.  To each
  $\theta_i$ we associate a unary predicate $T_i\in \Sigma_{\psi}$. 
  We consider the
  formula $\widetilde{\eta}(x,y)$ obtained from the Gaifman normal form of $\eta(x,y)$ 
  by replacing the sentence $\theta_i$ by the atomic formula $T_i(x)$. Note that $\widetilde{\eta}$ is $t$-local. 
  
  Under the assumption that ${\rm dist}(x,y)>2t$
  every $t$-local formula $\chi(x,y)$ is equivalent to $\chi_1(x)\wedge
  \chi_2(y)$ for $t$-local formulas $\chi_1(x)$ and $\chi_2(y)$. Furthermore, 
  $t$-local formulas are closed under boolean combinations. 
  By bringing $\widetilde{\eta}$ into
  disjunctive normal form and grouping conjuncts appropriately, 
  it follows that under the assumption ${\rm dist}(x,y)>2t$ the
  formula $\widetilde{\eta}$ is equivalent to a formula $\widetilde{\phi}(x,y)$ of the form
  $\bigvee_{(i,j)\in\mathcal F}\zeta_i(x)\wedge\zeta_j(y)$, where
  $\mathcal F\subseteq [n_2]\times[n_2]$ for some integer $n_2$ and the
  formulas $\zeta_i$ ($1\leq i\leq n_2$) are $t$-local. By considering appropriate 
  boolean combinations (or, for those familiar with model theory, by assuming that the $\zeta_i$ define local types)
  we may assume that $\models \forall x \bigwedge_{i\neq j} \neg(\zeta_i(x)\wedge
  \zeta_j(x))$, that is, every element of a graph satisfies at most one of the $\zeta_i$. 
  Note also that $\mathcal F$ is symmetric as $\eta$ (hence $\widetilde{\eta}$ and $\widetilde{\phi}$) are symmetric.
  
  We define 
  \mbox{$\psi(x,y):=\neg(\widetilde{\eta}(x,y)\leftrightarrow\widetilde{\phi}(x,y))\wedge ({\rm dist}(x,y)\leq 2t)$}, which is $2t$-strongly local, 
  and we define  $\mathsf I_{\mathsf T_\psi}$ as the interpretation of graphs in $\Sigma_{\mathsf T_\psi}$-structures by using the same definitions as in $\mathsf I_{\mathsf T}$ for the domain, then defining the adjacency relation by $\psi(x,y)$.  
  To each formula~$\zeta_i$ we associate a unary 
  predicate $Z_i\in \Sigma_{\mathsf P}$. We define the perturbation~$\mathsf{P}$
  as the sequence of subset complementations $\comp Z_i$ 
  (for~$(i,i)\in\mathcal F$) and of \mbox{$\comp Z_i\comp Z_j\comp (Z_i\cup Z_j)$} 
  (for~$(i,j)\in\mathcal F$ and~$i<j$). Denote by $\phi(x,y)$ the
  formula defining the edges in the interpretation~$I_\mathsf{P}$. Note that
  when the $Z_i$ are pairwise disjoint, then $\mathsf{P}$ complements
  exactly the edges of $Z_i$ or between~$Z_i$ and~$Z_j$, respectively. 
  The operation $\comp (Z_i\cup Z_j)$ complements all edges between
  $Z_i$ and $Z_j$, but also inside $Z_i$ and $Z_j$, which is undone by
  %the operations~
  $\comp Z_j$ and~$\comp Z_i$.

%  We define the formula $\phi$ by $
%    \phi(x,y):=\bigvee_{(i,j)\in\mathcal F}Z_i(x)\wedge Z_j(y)$,
%and we define $\mathsf I_{\phi}$ as  the interpretation defining the adjacency by the formula $\phi(x,y)$. Let $\mathsf{P} = (\Sigma_\mathsf{P}, \mathsf{I}_\phi)$
%
%Observe that the transduction $\mathsf T_q$ is equivalent to the sequence
% of subset complementations $\comp Z_i$ (for $(i,i)\in\mathcal F$) and of $\comp (Z_i\vee Z_j)\comp Z_i\comp Z_j$ (for $(i,j)\in\mathcal F$ and~$i<j$).

	Now assume that a graph $H$ is a $\mathsf T$-transduction of a graph $G$, and let $G^+$ be a \mbox{$\Sigma_{\mathsf T}$-expansion} of $G$ such that $H=\mathsf I_{\mathsf T}(G^+)$.
	We define the $\Sigma_\psi$-expansion $G^*$ of $G^+$ (which is thus a $\Sigma_{\mathsf T_{\psi}}$-expansion of $G$)
	 by defining, for each $i\in[n_1]$, $T_i(G^*)=V(G)$ if
  $G^+\models \theta_i$ and $T_i(G^*)=\emptyset$ otherwise.
  	Let $K=\mathsf I_{\mathsf T_\psi}(G^*)$. We define the $\Sigma_{\mathsf P}$-expansion $K^+$ of $K$ by defining, for each $j\in [n_2]$, 
  $Z_j(K^+)=\zeta_j(G^+)$. By the assumption that $\models \forall x \bigwedge_{i\neq j} \neg(\zeta_i(x)\wedge
  \zeta_j(x))$ the~$Z_j$ are pairwise disjoint. 
  Now, when $\dist(x,y)>2t$ there is no edge between~$x$ and~$y$ in~$K$, 
  hence $\phi$ on $K^+$ is equivalent to~$\widetilde{\phi}$ on $G^*$, which 
  in turn in this case is equivalent to~$\widetilde{\eta}(x,y)$ on~$G^*$. 
  On the other hand, when $\dist(x,y)\leq 2t$, then the perturbation 
  is applied to the edges defined by $\neg(\widetilde{\eta}(x,y)\leftrightarrow\widetilde{\phi}(x,y)$, which yields exactly the edges defined by $\widetilde{\eta}$
  on~$G^*$. Thus we have $\eta(G^+)=\widetilde{\eta}(G^*)=\phi(K^+)$, 
  hence    $\mathsf I_{\mathsf P}(K^+)=H$.
  
% According to these definitions,
%  we have
%  $\eta(G^+)=\widetilde{\eta}(G^*)=(\neg(\psi(x,y)\leftrightarrow
%  \widetilde{q}(x,y))(G^*)=q(K^+)$ 
%  hence
%   $\mathsf I_{\mathsf T_q}(K^+)=H$.
	
	It follows that the transduction $\mathsf T$ is subsumed by the composition
  of the immersive transduction $\mathsf T_\psi$ and a sequence of 
  subset complementations, the perturbation $\mathsf P$.
\end{proof}

\begin{corollary}
\label{cor:norm}
For every immersive transduction $\mathsf T$ and every perturbation $\mathsf P$, there exist
immersive transduction $\mathsf T'$ and a perturbation $\mathsf P'$, such that 
$\mathsf P'\circ\mathsf T'$ subsumes $\mathsf T\circ\mathsf P$.
%
%integer
%$k$, there exists an immersive transduction $\mathsf T'$ and an
%integer $k'$ such that the transduction consisting of complementing
%$k$ subsets and then applying $\mathsf T$ is subsumed by the transduction
%consisting of applying~$\mathsf T'$ and then $k'$ subset complementations.
\end{corollary}

\subsection{Immersive transductions}
\label{sec:immersive}

Intuitively, copying operations and perturbations are simple operations. 
The main complexity of a transduction is captured by its immersive part. 
The strongly local character of immersive transductions is the key tool in 
our further analysis. It will be very useful to give another (seemingly) 
weaker property for the existence of an immersive transduction in another
class, 
%
%Immersive transductions have very strong properties. However, the existence of an immersive transduction of a class in another can be characterized by a (seemingly) weaker property, 
which is the existence of a transduction that does not shrink the distances too much, as we prove now.

\begin{lemma}
\label{lem:epsilon}
	Assume there is a non-copying transduction $\mathsf T$ encoding $\Cc$ in $\Dd$ with associated interpretation $\mathsf I$ and an $\epsilon>0$ with the property that 
	for every $H\in\Cc$ and $G\in\Dd$ with $H\in \mathsf T(G)$ we have
	 ${\rm dist}_{H}(u,v)\geq \epsilon\,{\rm dist}_G(u,v)$ (for all $u,v\in V(H)$).
%	
%	for every graph $H\in\Cc$ there is a coloring~$G^+$ of~$G\in\Dd$ with
%	$H=\mathsf I(G^+)$ and 
%	 ${\rm dist}_{H}(u,v)\geq \epsilon\,{\rm dist}_G(u,v)$ for all $u,v\in V(H)$. 
%	 %
	 Then there exists an immersive transduction encoding $\Cc$ in $\Dd$ that subsumes $\mathsf T$.
\end{lemma}
\begin{proof}
	Let $\mathsf T=\mathsf I\circ\Gamma_\Sigma$ with $\mathsf I=(\nu(x),\eta(x,y))$. 
	By Gaifman's locality theorem, there is a set $\Sigma'\supseteq\Sigma$ of unary relations and 
	a formula $\phi(x,y)$, such that for every $\Sigma$-expanded graph $G^+$ there is a $\Sigma'$-expansion $G^*$ of $G^+$ with $G^*\models\phi(x,y)$ if and only if $G^+\models\eta(x,y)$, where $\phi$ is $t$-local for some $t$ (as in the proof of \cref{thm:normal}).
	We further define a new mark $M$ and let 
	$\mathsf I'=(M(x),\phi(x,y)\wedge \dist(x,y)\leq 1/\epsilon)$. 	
	The transduction $\mathsf T'=\mathsf I'\circ\Gamma_{\Sigma'\cup\{M\}}$ is immersive and subsumes the transduction~$\mathsf T$.
\end{proof}

Recall that $G\join K_1$ is obtained from $G$ by adding a 
new vertex, called an \emph{apex}, that is connected to all vertices of 
$G$. Of course, by adding an apex we shrink all distances in $G$. The
next lemma shows that when we can transduce $\Cc\join K_1$ in a 
class $\Ff$ with an immersive transduction, then we can in fact transduce 
$\Cc$ in the local balls of 
$\Ff$.

\begin{lemma}
	\label{cor:apex}
Let $\Cc,\Ff$ be graph classes, and let $\mathsf T$ be an 
immersive transduction encoding a class $\Dd$ in $\Ff$ with
$\Dd\supseteq\{G\join K_1\mid G\in\Cc\}$.
Then there exists an integer $r$ such that $\Cc\sqsubseteq_\FO^\circ\rloc{r}{\Ff}$.
\end{lemma}
\begin{proof}
 Let $\mathsf T=\mathsf I\circ\Gamma_\Sigma$ be an immersive transduction encoding $\Dd$ in $\Ff$.
		For every graph $G\in\Cc$ there exists a graph $F\in\Ff$ such that $G\join K_1=\mathsf I(F^+)$, where~$F^+$ is a $\Sigma$-expansion of $F$. Let $v$ be the apex of $G\join K_1$.
		By the strong locality of $\mathsf I$ we get $\mathsf I(F^+)=\mathsf I(B_r^{F^+}(v))$ for some fixed $r$ depending only on $\mathsf T$. Let $\mathsf U$ be a transduction allowing to take an induced subgraph, then $G$ can be encoded 
		in the class $\rloc{r}{\Cc}$ by the non-copying transduction $\mathsf U\circ\mathsf T$. 
\end{proof}

%\subsection{Further basic properties}

Finally, we show that when transducing an additive class $\Cc$ in a class $\Dd$, then we do not need perturbations at all. 

\begin{lemma}
\label{lem:slunion}
	Let $\Cc$ be an additive  class with $\Cc\sqsubseteq_\FO^\circ\Dd$.  Then there exists an immersive transduction encoding $\Cc$ in $\Dd$.
\end{lemma}
\begin{proof}
	According to \cref{thm:normal}, the transduction of $\Cc$ in $\Dd$ is subsumed by the composition of an immersive transduction $\mathsf T$ (with associated interpretation $\mathsf I=(\nu,\eta)$) and a perturbation (with associated interpretation $\mathsf I_P$). As $\eta$ is strongly local there exists 
	$r$ such that for all $G\in \Dd$ and $\Sigma_\mathsf{T}$-expansions 
	$G^+$ and all $u,v\in \mathsf{I}(G^+)$ we have $\dist_{\mathsf{I}(G^+)}(u,v)\geq
	\dist_G(u,v)/r$. 
	Let $c$ be the number of unary relations used in the perturbation. 
	 Let $H$ be a graph in~$\Cc$, let $n>3\cdot c^{|H|}$ and let $K=nH$ ($n$ disjoint copies of $H$). By assumption there exists an expansion~$G^+$ of a graph~$G$ in $\Dd$ with $K=\mathsf I_P\circ\mathsf I(G^+)$. By the choice of $n$, at least $3$ copies $H_1,H_2$, and~$H_3$ of $H$ in~$K$ satisfy the same unary predicates at the same vertices. 
	 For $a\in\{1,2,3\}$ and $v\in V(H_1)$, we denote by~$\tau_a(v)$ the vertex of $H_a$ corresponding to the vertex~$v$ of $H_1$ ($\tau_1(v)$ being the 
	 vertex $v$ itself).	 
	 Let $u,v$ be adjacent vertices of $H_1$. Assume that $u$ and $v$
	 have distance greater than~$r$ in $G$. Then $u$ and $v$ are made
	 adjacent in $K$ by the perturbation $\mathsf{P}$ (the edge cannot have
	 been created by $\eta$ as it is strongly $r$-local). As $\tau_a(u)$ is not
	 adjacent with $\tau_b(v)$ for $b\neq a$ there must be paths of length
	 at most $r$ linking~$\tau_a(u)$ with~$\tau_b(v)$ in $G$ for $a\neq b$ (the 
	 interpretation $\mathsf{I}$ must have introduced an edge that the
	 perturbation removed again). This however implies that there is a 
	 path of length at most $3r$ between $u$ and $v$ in $G$ (going 
	 from $u$ to $\tau_2(v)$ to $\tau_3(u)$ to $v$). It follows that for all 
	 $u,v\in V(K)$ we have $\dist_K(u,v)\geq \dist_G(u,v)/(3r)$. 
%	 
%	 By assumption there exists a path $P=(w_0=u,\dots,w_\ell=v)$ of length $\ell\leq r$ linking~$u$ and~$v$. Assume the adjacency of $w_i$ and $w_{i+1}$ has been complemented by $\mathsf I_P$. Then so are the adjacencies of $w_i$ and $\tau_2(w_{i+1})$, of $\tau_3(w_i)$ and $\tau_2(w_{i+1})$, and of $\tau_3(w_i)$ and $w_{i+1}$. It follows that in $\mathsf I(G^+)$ the distance of $u$ and $v$ is at most $3$. It follows that 
%	 ${\rm dist}_{H_1}\geq {\rm dist}_G/3r$. 
Hence the transduction obtained by composing $\mathsf T$ with the extraction of the induced subgraph $H_1$ implies the existence of an immersive transduction of $\Cc$ in $\Dd$, according to \cref{lem:epsilon}. 
\end{proof}
\begin{corollary}[Elimination of the perturbation]\label{crl:slunion}
	Let $\Cc$ be an additive class with  $\Cc\sqsubseteq_\FO\Dd$.  Then there exists a copy operation $\mathsf C$ and an immersive transduction~$\mathsf T_{\rm imm}$ such that $\mathsf T_{\rm imm}\circ\mathsf C$ is a transduction encoding $\Cc$ in $\Dd$.
\end{corollary}

We finish our study of the local properties of transductions by 
considering classes that locally have a certain property, such as classes
with locally bounded treewidth. 
Formally, let~$f$ be a graph invariant, that is, an isomorphism invariant mapping from graphs to natural numbers. A hereditary class $\Cc$ has {\em locally bounded $f$} if there exists a function $g$  with $f(G)\leq g(r)$ for every $G\in\Cc$ with radius at most $r$.
The invariant $f$ is {\em apex-friendly} if $f(G\join K_1)$ is bounded by a function of $f(G)$.

\begin{lemma}
	Assume $f_1$ and $f_2$ are graph invariants, and that $f_1$ is apex-friendly. Assume that every class  with locally bounded $f_1$ can be encoded by a non-copying transduction in a class  with locally bounded $f_2$. Then every class  with bounded $f_1$ can be encoded by a non-copying transduction in a class with bounded $f_2$.
\end{lemma}
\begin{proof}
	Let $\Cc$ be a class with bounded $f_1$. Let $\Cc'$ be the closure of $\{G\join K_1\mid G\in\Cc\}$ by disjoint union. Then $\Cc'$ has locally bounded $f_1$ (as $f_1$ is apex-friendly). By assumption there exists a class $\Dd$ with locally bounded $f_2$ such that $\Cc'\sqsubseteq_\FO^\circ\Dd$. %As the connected components of the members of $\Cc'$ have diameter at most $2$, 
	It follows from \cref{lem:slunion} that there is an immersive  transduction of $\Cc'$ in $\Dd'$ (with interpretation $\mathsf I=(\nu,\eta)$). Thus for every $G\in \Cc$ there exists a vertex coloring $H^+$ of $H\in \Dd$ such that $G\join K_1=\mathsf I(H^+)$. By the strong locality of $\eta$, the radius of $H$ is at most some $r$, thus $f_2(H)\leq C(r)$. We deduce (by composing with the generic induced subgraph transduction) that $\Cc$ is a non-copying transduction of the class of graphs $H$ with $f_2(H)\leq C(r)$.
\end{proof}

The assumption that $f_1$ is apex-friendly is necessary. Consider for instance $f_1$ to be the maximum degree, while $f_2$ is the treewidth. A class with locally bounded maximum degree is simply a class with bounded maximal degree, thus has locally bounded treewidth. However, the class of grids (which has bounded maximum degree) cannot be transduced in any class of bounded tree-width as it cannot be transduced in the class of tree-orders.

\section{Some applications of the local normal form}	
\label{sec:applications-nf}
\subsection{Transductions in paths}
\label{sec:paths}
\begin{theorem}
\label{thm:transpath}
	A class $\Cc$ is {\FO} transduction equivalent to the class of paths if and only if it is a perturbation of a class with bounded bandwidth that contains graphs with arbitrarily large connected components.
\end{theorem}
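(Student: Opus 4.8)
The plan is to prove both implications, and throughout to exploit that a perturbation is an FO transduction whose effect is undone by another perturbation; hence if $\Cc$ is a perturbation of a class $\Dd$, then $\Cc\equiv_\FO\Dd$, and it suffices to establish $\Dd\equiv_\FO\Pp$ for the bounded-bandwidth class $\Dd$ under consideration.

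For the backward implication, assume $\Dd$ has bandwidth at most $b$ and contains arbitrarily large connected components. To get $\Dd\sqsubseteq_\FO\Pp$, I would take $G\in\Dd$ with a linear layout $v_1,\dots,v_n$ and transduce it from the path $P_n$: colour one endpoint to orient the path, and colour each vertex by the set of its back-neighbours among the at most $b$ preceding layout positions (so at most $2^b$ colours). Since ``being at path-distance exactly $k$'' is FO-definable for each fixed $k\le b$, the adjacency of $G$ is recovered by a single fixed interpretation, uniformly over $\Dd$. For the reverse direction $\Pp\sqsubseteq_\FO\Dd$, I would use that a connected graph $H$ on $m$ vertices of bandwidth at most $b$ has, in its layout, two endpoints at layout-distance $m-1$; as every edge moves at most $b$ positions, any shortest path between them has length at least $(m-1)/b$, and shortest paths are induced. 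Colouring such an induced path and restricting to it is a transduction producing a path of length at least $(m-1)/b$; since the component sizes $m$ are unbounded while $b$ is fixed, arbitrarily long paths are produced, giving $\Pp\sqsubseteq_\FO\Dd$.

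For the forward implication, assume $\Cc\equiv_\FO\Pp$ and apply \cref{thm:normal} to a transduction witnessing $\Cc\sqsubseteq_\FO\Pp$: it is subsumed by a copying operation, an immersive transduction, and a perturbation. A bounded copy of a path still has bounded bandwidth (order the copies blockwise along the path), and the crucial step is that an immersive transduction sends a bounded-bandwidth class to a bounded-bandwidth class: taking the path's linear order as a layout, the locality built into immersive transductions bounds how far adjacency in the output can reach along this order. Consequently $\Cc$ is a perturbation of a bounded-bandwidth class $\Dd$. It then remains to see that $\Dd$ has arbitrarily large connected components; otherwise $\Dd$ would have bounded component size, hence bounded shrubdepth, so $\Dd\sqsubseteq_\FO\Tt_n$ for some $n$, and then $\Pp\sqsubseteq_\FO\Cc\sqsubseteq_\FO\Dd\sqsubseteq_\FO\Tt_n$ would contradict the unbounded shrubdepth of paths.

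The hard part will be the lemma that an immersive transduction preserves bounded bandwidth: everything in the forward implication funnels through controlling the reach of the interpreted adjacency relative to the linear order inherited from the path, and this is exactly where the structural meaning of ``immersive'' from \cref{sec:prelims} must be cashed out quantitatively. The remaining ingredients---FO-definability of fixed path-distances, the orientation trick, the shortest-path/bandwidth estimate, and the fact that bounded component size forces bounded shrubdepth---are routine, so I would present them tersely and concentrate the argument on the bandwidth bound for the immersive step.
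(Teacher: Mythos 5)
Your overall architecture is sound, and your forward implication is essentially the paper's: apply \cref{thm:normal}, observe that a $k$-copy of a path followed by an immersive (strongly local) transduction lands inside the subgraphs of a bounded power of a path, i.e.\ a bounded-bandwidth class, and rule out bounded component size because that would give $\Dd\sqsubseteq_\FO\Tt_n$ (or, as the paper argues via \cref{lem:Ee}, $\Cc\equiv_\FO\Ee$), contradicting $\Pp\sqsubseteq_\FO\Cc$. The quantitative content of the immersive step is exactly what you predict and is not hard: strong $r$-locality means output-adjacency implies input-distance at most $r$, which along a bandwidth-$b$ layout means layout-distance at most $rb$. Your treatment of $\Pp\sqsubseteq_\FO\Dd$ via long induced shortest paths in large bounded-bandwidth components is correct and in fact more explicit than the paper's own proof of \cref{thm:transpath}, which defers that point to the proof of \cref{thm:quasi}.

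The one genuine gap is in your encoding of a bandwidth-$b$ graph into a coloured path. Marking a single endpoint does not let a first-order formula decide, for two vertices $x,y$ at distance $k\leq b$ deep inside a long path, which of the two precedes the other: that comparison amounts to comparing distances to a far-away marked vertex, which is not a local property and hence not FO-definable on paths of unbounded length (Gaifman). Your back-neighbour colour sets are only usable once the orientation is locally available, so as written the interpretation is not a single fixed formula working uniformly over $\Dd$. The standard repair is a periodic colouring of the layout (positions modulo $2b+1$, say), from which two vertices at distance at most $b$ can locally compare their positions. The paper instead sidesteps orientation entirely: it notes that $\Dd$ lies in the monotone closure of the class of $\ell$-powers of paths, that taking the $\ell$-th power is a symmetric, orientation-free transduction (defined by $\dist(x,y)\leq\ell$), and that the monotone closure of a class with bounded star chromatic number is a transduction of it (\cref{lem:monotone}). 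Either fix is short, but one of them is needed; with that repair your argument goes through, and it is somewhat more elementary in that it avoids the star-colouring machinery at the cost of the explicit orientation bookkeeping.
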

%\end{restatable}

%Precisely, we prove that a class $\Cc$ is a transduction of the class of paths  if and only if there exist integers $k,\ell$ such that every graph $G\in\Cc$ can be obtained from a graph with bandwidth at most $\ell$ by complementing at most $k$ subsets.
\begin{proof}%[Proof of \cref{thm:transpath}]
Assume 
$\mathsf{T}$ is a transduction of~$\Cc$ in $\Pp$. 
	According to \cref{thm:normal}, $\mathsf{T}\leq \mathsf P\circ\mathsf T_{\rm imm}\circ\mathsf C_k$, where $k\geq 1$, $\mathsf T_{\rm imm}$ is immersive, and $\mathsf P$ is a perturbation. 
	Observe first that $\mathsf{C}_k(\Pp)$ is included in the class of all 
	subgraphs of the~$(k+1)$-power of paths. 
	By the strong locality property of immersive transductions, 
%	the 
%	immersive transduction creates edges only between vertices of bounded
%	distance. In combination we obtain that 
	every class obtained from $\Pp$ by the composition of a copy operation and an immersive transduction has its image included in the class of all the  subgraphs of the $\ell$-power of paths, for some integer $\ell$ depending only on the transduction, hence, in a class of bounded bandwidth. 
	Conversely, assume that $\Cc$ is a perturbation of a class $\Dd$ containing graphs with bandwidth at most $\ell$ that contains graphs with arbitrarily large connected components. 
	Then $\Dd$ is a subclass of the monotone closure (containing all subgraphs of the class) of the class $\mathcal P^\ell$ of $\ell$-powers of paths, which has bounded star chromatic number. We show in \cref{lem:monotone} in \cref{sec:monotone} that we can obtain the monotone closure of a class with bounded star chromatic number as a transduction. By this result and the 
	observation that taking the $\ell$-power is obviously a transduction, 
	we get that $\Cc\sqsubseteq_\FO\Pp$. To see that vice versa
	$\Pp\sqsubseteq_\FO\Cc$ observe that we can first undo the perturbation
	by carrying out the edge complementations in reverse order. Then we have
	arbitrarily large connected components, which in a graph of bounded
	bandwidth have unbounded diameter. From this we can transduce 
	arbitrarily long paths by extracting an induced subgraph. 
\end{proof}

%\textcolor{red}{Define pathwidth and state that bandwidth at most
%$\ell$ has pathwidth at most $\ell$. State that $\ell$-powers of 
%paths has bounded star chromatic number.}

%
\subsection{Local monadically stable, straight, and dependent classes}
\label{sec:local}
%We now consider local variants of monadic stability, monadic straightness, and monadic dependence.
	A class $\Cc$ is {\em locally monadically dependent} if, for every integer $r$, the class~$\rloc{r}{\Cc}$ is monadically dependent;
		a class~$\Cc$ is {\em locally monadically stable} if, for every integer~$r$, the class~$\rloc{r}{\Cc}$ is monadically stable;
		a class~$\Cc$ is {\em locally monadically straight} if, for every integer~$r$, the class~$\rloc{r}{\Cc}$ is monadically straight. %The aim of this section is 
\begin{theorem}
\label{thm:local}
For a class $\Cc$ of graphs we have the following equivalences:
\begin{enumerate}
\item $\Cc$ is locally monadically dependent if and only if $\Cc$ is
  monadically dependent;
\item $\Cc$ is locally monadically straight if and only if $\Cc$ is
  monadically straight;
\item $\Cc$ is locally monadically stable if and only if $\Cc$ is
  monadically stable.
\end{enumerate}
\end{theorem}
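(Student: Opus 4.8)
The backward implications (global $\Rightarrow$ local) are the easy ones and are common to all three items: for every radius $r$ the ball $\rloc{r}{G}(v)$ is an induced subgraph of $G$ with one marked vertex, so the class $\Cc^{[r]}:=\{\rloc{r}{G}(v):G\in\Cc,\ v\in V(G)\}$ is contained in the transduction closure of $\Cc$ (mark and delete everything outside the ball). Since being monadically stable, straight, or dependent means not being able to $\FO$-transduce $\Hh$, $\TP$, or $\Gg$ respectively, and these are transduction ideals, monadic $X$-ness of $\Cc$ passes to every $\Cc^{[r]}$. The content is therefore entirely in the forward (local $\Rightarrow$ global) implications, which I prove contrapositively and uniformly. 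Write $\Oof_X$ for the obstruction ($\Hh$, $\TP$, or $\Gg$). The first step is to replace $\Oof_X$ by a \emph{bounded-diameter} equivalent class $\Oof_X'$: a half-graph already has diameter $\le 3$; adding a universal root to a connected trivially perfect graph, and a universal vertex to an arbitrary graph, shows $\TP\equiv_\FO\{\text{comparability graphs of rooted trees}\}$ and $\Gg\equiv_\FO\{\text{graphs of diameter}\le 2\}$, both consisting of connected graphs of diameter at most two. Thus if $\Cc$ is not monadically $X$ then $\Oof_X'\sqsubseteq_\FO\Cc$ for a class $\Oof_X'$ of connected graphs of bounded diameter.

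Now I apply the normal form. By \cref{thm:normal} the transduction witnessing $\Oof_X'\sqsubseteq_\FO\Cc$ factors (up to subsumption) as a perturbation after an immersive transduction after a copying operation, so every $W\in\Oof_X'$ is an induced subgraph of $\mathrm{perturb}(\mathsf I(E))$ for some $E\in\mathrm{copy}(\Cc)$. I use two structural features. First, an immersive transduction is non-copying and its edge formula $\phi(x,y)$ implies $\dist_E(x,y)\le\rho$ for a fixed radius $\rho$; second, a perturbation flips adjacency between a bounded number of definable parts, and by Gaifman's theorem membership in a part is determined by the radius-$\ell$ type $\theta(x)$ of $x$, of which there are at most $T$. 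Hence on the vertex set $U\subseteq V(E)$ of $W$ the output adjacency has the form $x\sim_W y\iff \phi(x,y)\ \mathrm{XOR}\ f(\theta(x),\theta(y))$ for a symmetric $f\colon[T]^2\to\{0,1\}$, where $\phi(x,y)$ forces $x,y$ to be \emph{close} ($\dist_E\le\rho$). Splitting the edges of $W$ into close pairs and far pairs, the far part is $f(\theta(x),\theta(y))$, i.e.\ a blow-up of the fixed $T$-vertex graph given by $f$; such blow-ups have bounded shrubdepth and are therefore monadically stable, straight and dependent. Consequently $W$ is a perturbation of the graph $W_{\mathrm{close}}$ carrying only its close edges, and since perturbations are transduction equivalences that preserve monadic $X$-ness, the class of the $W_{\mathrm{close}}$'s is \emph{not} monadically $X$ either.

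It remains to localize: to exhibit a uniform radius $R$ such that $\Cc^{[R]}$ transduces arbitrarily complex sub-obstructions. The key is that, inside any ball $\rloc{R}{E}(v)$ with $R\ge\ell$, both $\phi$ and the types $\theta$ are computable, so the entire output adjacency on $U\cap\rloc{R}{E}(v)$ is reconstructible from the ball $\rloc{R+\ell}{E}(v)$ by a fixed transduction; thus $\Cc^{[R+\ell]}$ (after noting that balls of $\mathrm{copy}(\Cc)$ are transductions of balls of $\Cc$) transduces every such ball-restriction. For \emph{stability} this finishes cleanly: given the half-graph $a_1,\dots,a_n,b_1,\dots,b_n$ with $a_i\sim_W b_j\iff i\le j$, I colour each index $i$ by $(\theta(a_i),\theta(b_i))$ and by the close/far status of the threshold pairs, and pass to a large monochromatic index set $I$; fixing the constant $c=f(\alpha,\beta)$ on $I$, the relation $\phi(a_i,b_j)$ equals $[i\le j]$ when $c=0$ (resp.\ $[i>j]$ when $c=1$), so every ``genuine'' order-edge is close. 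Then $a_{\min I}$ (resp.\ $a_{\max I}$) is close to all of its many $b$-neighbours, which drags the whole sub-half-graph on $I$ into the ball $\rloc{2\rho}{E}(a_{\min I})$; reconstructing inside that ball shows $\Cc^{[2\rho+\ell]}$ transduces arbitrarily large half-graphs, so $\Cc$ is not locally monadically stable.

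For \emph{straightness} and \emph{dependence} the scheme is identical, but the extraction in the last step is where the real difficulty lies and is the step I expect to be the main obstacle. Here the obstruction has a universal vertex $u$, and $u$'s flipped (far) neighbours may legitimately be spread out across $E$, so one cannot hope to pull all of $U$ into a single bounded ball. What saves the argument is again that only boundedly many Gaifman types govern all long-range adjacency: the far part of any vertex's neighbourhood trace takes at most $2^{T}$ values, so to realize the full order (for $\TP$) or the full VC-complexity (for $\Gg$, using the subset-encoding witnesses $a_i\sim b_S\iff i\in S$) the close edges must carry all but a bounded amount of the information. A pigeonhole/Ramsey (respectively VC-/sunflower) extraction then forces the complexity-carrying vertices to be pairwise close, hence to lie in a bounded ball, at which point the reconstruction step applies verbatim and produces a transduction from $\Cc^{[R]}$ onto arbitrarily complex members of $\Oof_X'$. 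Making this extraction quantitatively uniform in the radius $R$, independently of how the witness is scattered in the source, is the crux of the proof; the bounded number of types is exactly what makes it possible.
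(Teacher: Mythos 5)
Your backward direction and your starting move for the forward direction (pass to the contrapositive, replace each obstruction class by a transduction-equivalent class of bounded diameter via an apex, then apply \cref{thm:normal}) are sound and match the spirit of the paper. The gap is exactly where you say it is: after the normal form you are left with a perturbation sitting on top of the immersive transduction, and the far edges it creates prevent you from confining a single witness graph to a bounded ball. Your attempt to absorb the far part as a ``blow-up governed by boundedly many Gaifman types'' and then extract a close witness works plausibly for half-graphs (where a single threshold vertex drags the whole configuration into one ball), but for $\TP$ and $\Gg$ you explicitly leave the extraction step as an unproved hope, and it is not a routine Ramsey/VC argument: nothing in your setup prevents the complexity-carrying vertices from being genuinely scattered, with the perturbation doing all the work at long range. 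As written, items (1) and (2) are not proved.

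The idea you are missing is to close the obstruction class under \emph{disjoint union} as well as under apexing: the paper works with $\Oof'=\{n(G\oplus K_1)\mid n\in\mathbb N,\ G\in\Oof\}$, which is still transduction-equivalent to $\Oof$ for $\Oof\in\{\Gg,\TP,\Hh\}$. Closure under disjoint union lets one eliminate the perturbation \emph{entirely} rather than analyze it (\cref{lem:slunion}): taking $n>3\cdot 2^{c|G|}$ disjoint copies forces three copies to carry identical perturbation marks, and then any complemented pair on a shortest path in one copy yields, in the unperturbed image, a detour of length $3$ through the other two copies; hence the perturbed transduction shrinks distances by at most a constant factor, and \cref{lem:epsilon} upgrades it to an immersive transduction with no perturbation at all. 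Once the transduction is purely immersive, strong locality plus the apex of $G\oplus K_1$ confines the preimage of each $G$ to a ball of bounded radius (\cref{cor:apex}), giving $\Oof\sqsubseteq_\FO^\circ\rloc{r}{\Cc}$ uniformly (\cref{lem:golocal}), with no case analysis on the shape of the obstruction. This is why the paper's proof of \cref{thm:local} is three lines per item: the only thing to check is that $\{n(G\oplus K_1)\}$ is a transduction of $\Gg$, of $\TP$, and of $\Hh$ respectively. I recommend you restructure your proof around this disjoint-union trick; it replaces the problematic extraction step with a purely combinatorial distance argument.
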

%\end{restatable}

\begin{proof}%[Proof of Theorem~\ref{thm:local}]
	The proof will follow from the following claim.
		\begin{claim}
\label{cl:golocal}
	Let $\Cc$ be a class such that the class $\Cc'=\{n(G\join K_1)\mid n\in\mathbb N, G\in\Cc\}$ is a transduction of $\Cc$. Then, for every class $\Dd$ we have
	$\Cc\sqsubseteq_\FO \Dd$ if and only if there exists some integer $r$ with $\Cc\sqsubseteq_\FO \rloc{r}{\Dd}$.
\end{claim}
\begin{claimproof}
	Obviously, if there exists some integer $r$ with $\Cc\sqsubseteq_\FO \rloc{r}{\Dd}$, then $\Cc\sqsubseteq_\FO \Dd$.
	Now assume  $\Cc\sqsubseteq_\FO\Dd$. As $\Cc'\equiv_\FO\Cc$ we prove as in \cref{lem:slunion} that there is a transduction of~$\Cc'$ in $\Dd$ that is the composition of a copy operation $\mathsf C$ and an immersive transduction $\mathsf T$. Let $\Dd'=\mathsf C(\Dd)$. According to \cref{cor:apex}, there is an integer $r$ such that $\Cc\sqsubseteq_\FO^\circ \rloc{r}{\Dd'}$ thus, as $\rloc{r}{\Dd'}=\mathsf C(\rloc{r}{\Dd})$, we have
	$\Cc\sqsubseteq_\FO \rloc{r}{\Dd}$.
\end{claimproof}

		The class $\{n(G\join K_1)\mid n\in\mathbb N, G\in\Gg\}$ is obviously a transduction of~$\Gg$. Hence, according to \cref{cl:golocal}, 
		a class $\Cc$ is locally monadically dependent if and only if it is monadically dependent.
	The class $\{n(G\join K_1)\mid n\in\mathbb N, G\in\TP\}$ is a  transduction of~$\TP$. Hence, according to \cref{cl:golocal}, 
		a class $\Cc$ is locally monadically straight if and only if it is monadically straight.
		The class $\{n(G\join K_1)\mid n\in\mathbb N, G\in\Hh\}$ is a  transduction of~$\Hh$.
	Hence, according to \cref{cl:golocal}, 
		a class $\Cc$ is locally monadically stable if and only if it is monadically stable.	
\end{proof}
%\begin{lemma}
%\label{thm:lmd}
%A class $\Cc$ is locally monadically dependent if and only if it is monadically dependent.
%\end{lemma}
%	If $\Cc$ is monadically dependent, it is obviously locally monadically dependent.
%	Assume $\Cc$ is not monadically dependent.
%	According to \cref{lem:slunion} there is a transduction from $\Cc$ to $\Gg$ that is the composition of a copy operation $\mathsf C$ and an immersive transduction $\mathsf T$. Let $\Dd=\mathsf C(\Cc)$. According to \cref{cor:apex}, there is an integer $r$ such that $\Gg\sqsubseteq_\FO^\circ \Dd_r$. As $\mathsf C$ is a transduction from $\Cc_r$ to $\Dd_r$ we conclude that $\Gg\sqsubseteq_\FO \Cc_r$, thus $\Cc_r$ is not locally monadically dependent.
%%	According to \cref{lem:scd} (applied $k$ times), the hereditary closure of $\Dd$ contains all graphs. However, the hereditary closure $\bar{\Dd}$ of $\Dd$ may be obviously obtained by a strongly local transduction. If follows that $\bar{\Dd}$ is a strongly local transduction $\mathsf T$ of $\Cc$. 
%%	Then, according to \cref{cor:apex}, we have $\bar{\Dd}\sqsubseteq_\FO\Cc_\ell$ for some integer $\ell$.
%%It follows that $\Cc$ is not locally monadically dependent. 
%\end{proof}

\begin{example}
Although the class of unit interval graphs has unbounded clique-width,
	every proper hereditary subclass of unit interval graphs
	has bounded clique-width~\cite{UIcw}. This is in particular the case for the class of unit interval graphs with bounded radius. As classes with bounded clique-width are monadically dependent, the class of unit interval graphs is locally monadically dependent, hence monadically dependent. 
%	
%	However, this class is not stable, as the complement of a half-graph is a unit interval graph. Having unbounded clique-width, the class of unit interval graphs is not a transduction of the class of half-graphs, hence the class of unit interval graphs is strictly between the class of half-graphs and the class of all graphs in the transduction quasiorder.
\end{example}

\section{Structural properties of the transduction quasiorders}
\label{sec:structural-prop}
Many properties will be similar when considering $\sqsubseteq_\FO$ and $\sqsubseteq_\FO^\circ$. To avoid unnecessary repetitions of the statements and arguments, we shall use
the notations $\sqsubseteq,\sqsubset,\equiv$ to denote either $\sqsubseteq_\FO^\circ,\sqsubset_\FO^\circ,\equiv_\FO^\circ$ or $\sqsubseteq_\FO,\sqsubset_\FO,\equiv_\FO$.

For two classes $\Cc_1$ and $\Cc_2$ define $\Cc_1+\Cc_2=\{G_1\cup G_2: G_1\in\Cc_1, G_2\in\Cc_2\}$. A class $\Cc$ is \emph{additive} if $\Cc+\Cc\equiv\Cc$, for instance every class closed under disjoint union is additive.  Note that if $\Cc_1$ and $\Cc_2$ are additive then $\Cc_1+\Cc_2$ is also additive. We further say that a class $\Cc$ is \emph{essentially connected} if it is equivalent to the subclass ${\rm Conn}(\Cc)$ of all its connected graphs.

In this section we will consider the quasiorders $\sqsubseteq_\FO^\circ$ and $\sqsubseteq_\FO$, as well as their restrictions to additive classes of graphs.
%if it is closed under disjoint union, i.e. $G_1,\G_2\in\Cc$ implies $G_1\cup G_2\in\Cc$.
Let $\mathfrak C$ be the collection of all graph classes, and let $\mathfrak A$ be the collection of all additive graph classes.
%
%For a given  $\sqsubseteq$ we consider  two quasiorders: $(\mathfrak C,\sqsubseteq)$, on the collection of all classes of finite graphs, and $(\mathfrak A,\sqsubseteq)$ on the collection of all additive classes of finite graphs.
While speaking about these quasiorders,  we will implicitly consider their quotient by the equivalence relation $\equiv$, which are partial orders. For instance, when we say that  $(\mathfrak C,\sqsubseteq)$ is a join-semilattice, we mean that  $(\mathfrak C\,/\equiv,\sqsubseteq)$ is a join semilattice.
The symbol $\cover$ will always been used with reference to $(\mathfrak C,\sqsubseteq)$, $\Cc\cover\Dd$ expressing that there exist no class $\Ff$ with $\Cc\sqsubset\Ff\sqsubset\Dd$. When we shall consider covers in $(\mathfrak A,\sqsubseteq)$ we will say explicitly that $(\Cc,\Dd)$ is a cover in $(\mathfrak A,\sqsubseteq)$, expressing that there exists no additive class $\Ff$ with $\Cc\sqsubset\Ff\sqsubset\Dd$.

\subsection{The transduction semilattices $(\mathfrak C,\sqsubseteq_\FO^\circ)$ and $(\mathfrak C,\sqsubseteq_\FO)$}
The aim of this section is to prove that $(\mathfrak C,\sqsubseteq_\FO^\circ)$ and $(\mathfrak C,\sqsubseteq_\FO)$ are distributive join-semilattices and to state some of their properties.

\begin{lemma}
	\label{cl:union}
 If $\Dd\sqsubseteq\Cc_1\cup \Cc_2$, then there is a partition $\Dd_1\cup \Dd_2$ of $\Dd$ with $\Dd_1\sqsubseteq\Cc_1$ and 	$\Dd_2\sqsubseteq\Cc_2$.
 	If  $\Dd$ is additive, then
	$\Dd\sqsubseteq\Cc_1\cup \Cc_2\ \iff\ \Dd\sqsubseteq\Cc_1\text{ or }\Dd\sqsubseteq\Cc_2$.
\end{lemma}
\begin{proof}
The first statement is straightforward.
We now prove the second statement.
For an integer $n$, let $G_n$ be the disjoint union of all the graphs in $\Dd$ with at most $n$ vertices.
%As $\Dd$ is additive we have $G_n\in\Dd$. Assume $\Dd\sqsubseteq\Cc_1\cup \Cc_2$. According to the first statement, there exists a partition $\Dd_1\cup \Dd_2$ of $\Dd$ with
%	$\Dd_1\sqsubseteq\Cc_1$ and $\Dd_2\sqsubseteq\Cc_2$.
%	Let $i\in\{1,2\}$ be such that  $\Cc_i$ contains infinitely many graphs $G_n$.
%Then the generic (non-copying) transduction $\mathsf T$ allowing to extract an induced subgraph witnesses $\Cc\sqsubseteq\Cc_i$, thus $\Dd\sqsubseteq\Cc_i$. 

Assume $\Dd$ is additive and $\Dd\sqsubseteq\Cc_1\cup\Cc_2$.
According to the first statement, there exists a partition $\Dd_1,\Dd_2$ of $\Dd$ with 
$\Dd_1\sqsubseteq\Cc_1$ and $\Dd_2\sqsubseteq\Cc_2$.
For $G\in\Dd$ define \mbox{$\mathscr S(G)=\{H\cup G: H\in\Dd\}$}. Note that $\mathscr S(G)\subseteq\Dd+\Dd$.
Let $\Dd'=\Dd+\Dd$. As $\Dd'\sqsubseteq\Dd_1\cup\Dd_2$ there exists a partition $\Dd_1',\Dd_2'$ of $\Dd'$ with $\Dd_1'\sqsubseteq\Dd_1$ and $\Dd_2'\sqsubseteq\Dd_2$.
If, for every $G\in\Dd$ we have $\mathscr S(G)\cap \Dd_1'\neq\emptyset$ then $\Dd\sqsubseteq \Dd_1'$ (by the generic transduction extracting an induced subgraph) thus $\Dd\equiv\Dd_1\sqsubseteq\Cc_1$. Similarly, if for every $G\in\Cc$ we have $\mathscr S(G)\cap \Dd_2'\neq\emptyset$ then $\Dd\sqsubseteq\Cc_2$. Assume for contradiction that there exist $G_1,G_2\in\Dd$ with $\mathscr S(G_i)\cap \Dd_i'=\emptyset$. Then $G_1\cup G_2$ belongs neither to $\Dd_1'$ nor to $\Dd_2'$, contradicting the assumption that $\Dd_1',\Dd_2'$ is a partition of $\Dd'=\Dd+\Dd$.
\end{proof}

\begin{lemma}
\label{lem:irred1}
 If $\Cc_1$ and $\Cc_2$ are incomparable, then $\Cc_1\cup\Cc_2$ is not equivalent to an additive class. 
  In particular, $\Cc_1\cup\Cc_2\not\equiv\Cc_1+\Cc_2$. 
 \end{lemma}
 \begin{proof}
 	We prove by contradiction that $\Cc_1\cup\Cc_2$ is not equivalent to an additive class.
	Assume that we have $\Dd\sqsubseteq \Cc_1\cup\Cc_2$, where $\Dd$ is additive. According to \cref{cl:union} we have $\Dd\sqsubseteq \Cc_1$ or $\Dd\sqsubseteq \Cc_2$ thus if $\Cc_1\cup\Cc_2\sqsubseteq\Dd$, then 
	$\Cc_2\sqsubseteq \Cc_1$ or $\Cc_1\sqsubseteq \Cc_2$, contradicting the hypothesis that $\Cc_1$ and $\Cc_2$ are incomparable.
 \end{proof}

\begin{theorem}
\label{thm:sl}
	The quasiorder $(\mathfrak C,\sqsubseteq)$ is a distributive join-semilattice, where the join of $\Cc_1$ and $\Cc_2$ is $\Cc_1\cup\Cc_2$. In this quasiorder, additive classes are join-irreducible.
	This quasiorder has a minimum $\Ee$ and a maximum $\Gg$.
\end{theorem}
\begin{proof}
	Of course we have $\Cc_1\sqsubseteq \Cc_1\cup\Cc_2$ and $\Cc_2\sqsubseteq \Cc_1\cup\Cc_2$. 
	Now assume $\Dd$ is such that $\Cc_1\sqsubseteq\Dd$ and  $\Cc_2\sqsubseteq\Dd$. 
	Let $\mathsf T_1$ and $\mathsf T_2$ be transductions encoding $\Cc_1$ and $\Cc_2$ in $\Dd$, with associated interpretations $\mathsf I_1=(\nu_1,\eta_1)$ and $\mathsf I_2=(\nu_1,\eta_1)$. 
	 By relabeling the colors, we can assume that the set $\Sigma_1$ of unary relations used by $\mathsf I_1$ is disjoint from the set $\Sigma_2$ of unary relations used by~$\mathsf I_2$. 
	 Without loss of generality, we have $\mathsf T_1=\mathsf I_1\circ\Gamma_{\Sigma_1}\circ\mathsf C$ and 
	 $\mathsf T_2=\mathsf I_2\circ\Gamma_{\Sigma_2}\circ\mathsf C$, where $\mathsf C$ is a copying operation if $\sqsubseteq$ is $\sqsubseteq_\FO$, or the identity mapping if $\sqsubseteq$ is $\sqsubseteq_\FO^\circ$.
	 Let $M$ be a new unary relation. We define the interpretation $\mathsf I=(\nu,\eta)$ by
	$\nu:=\big((\exists v\ M(v))\wedge \nu_1\big)\vee \big(\neg(\exists v\ M(v))\wedge \nu_2\big)$ and $\eta:=\big((\exists v\ M(v))\wedge \eta_1\big)\vee \big(\neg(\exists v\ M(v))\wedge \eta_2\big)$. Let $G\in\Cc_1\cup \Cc_2$. If $G\in\Cc_1$, then there exists a coloring $H^+$ of $H\in\mathsf C(\Dd)$ with $G=\mathsf I_1(H^+)$. We define~$H^*$ as the expansion of~$H^+$ where all vertices also belong to the unary relation~$M$. Then $G=\mathsf I(H^*)$. Otherwise, if $G\in\Cc_2$, then there exists a coloring $H^+$ of $H\in\mathsf C(\Dd)$ with $G=\mathsf I_2(H^+)$ thus
	$G=\mathsf I(H^+)$. As we did not introduce new copying transductions we deduce $\Cc_1\cup\Cc_2\sqsubseteq\Dd$.	
	It follows that~$(\mathfrak C,\sqsubseteq)$ is a join semi-lattice, which is distributive according to \cref{cl:union}.
	
	That additive classes are join-irreducible follows from \cref{lem:irred1}.
\end{proof}

We now state an easy lemma on covers in distributive join-semilattices.
%, which will be useful.
\begin{lemma}
	\label{lem:slcover}
	Let $(X,\leq)$ be a distributive join-semilattice (with join $\vee$). If $a\cover b$ and $b\not\leq a\vee c$, then $a\vee c\cover b\vee c$.
\end{lemma}
\begin{proof}
	Assume $a\vee c\leq x\leq b\vee c$. As $(X,\leq)$ is distributive there exist $b'\leq b$ and $c'\leq c$ with $x=b'\vee c'$. Thus $a\leq a\vee b'\leq b$. As $a\cover b$, either $a=a\vee b'$ (thus $b'\leq a$) and thus $x=a\vee c$, or $a\vee b'=b$ and then $b\vee c\leq a\vee b'\vee c\leq a\vee x\vee c=x\leq b\vee c$ thus $x=b\vee c$. Hence either $a\vee c=b\vee c$ (which would contradict $b\not\leq a\vee c$), or $a\vee c\cover b\vee c$.
\end{proof}

\begin{corollary}
%\begin{lemma}
\label{lem:cover}
 If $\Cc_1\cover\Cc_2$ and $\Cc_2\not\sqsubseteq\Cc_1\cup\Dd$, then $\Cc_1\cup\Dd\cover\Cc_2\cup\Dd$.
%\end{lemma}
\end{corollary}
%\begin{proof}
%		Let $\Ff$ be a class with 
%	$\Cc_1\cup\Dd\sqsubseteq\Ff\sqsubseteq\Cc_2\cup\Dd$.
%	According to \cref{cl:union} there exists a partition $\Ff_1\cup\Ff_2$ of $\Ff$ with $\Ff_1\sqsubseteq \Cc_2$ and $\Ff_2\sqsubseteq\Dd$. Thus $\Cc_1\sqsubseteq \Cc_1\cup \Ff_1\sqsubseteq \Cc_2$.
%	As $\Cc_1\cover\Cc_2$ either $\Cc_1\equiv \Cc_1\cup \Ff_1$ (i.e. $\Ff_1\sqsubseteq \Ff_1$)
% and thus $\Ff\equiv \Cc_1\cup\Dd$ or $ \Cc_1\cup \Ff_1\equiv \Cc_2$ 
%and then $\Cc_2\cup\Dd\sqsubseteq \Cc_1\cup \Ff_1\cup \Dd\sqsubseteq  \Cc_1\cup \Ff\cup \Dd\equiv \Ff\sqsubseteq \Cc_2\cup\Dd$ (i.e. $\Cc_2\cup\Dd\equiv \Ff$). Hence $\Cc_1\cup\Dd\equiv\Cc_2\cup\Dd$ or $\Cc_1\cup\Dd\cover\Cc_2\cup\Dd$. The first case cannot happen as $\Cc_2\not\sqsubseteq \Cc_1\cup \Dd$ by assumption.
%\end{proof}
\begin{corollary}
\label{cor:corcover}
	If $\Cc_1\cover\Cc_2$, $\Cc_1\sqsubseteq\Dd$, and $\Cc_2$ and $\Dd$ are incomparable, then $\Dd\cover \Dd\cup\Cc_2$. 
\end{corollary}
\begin{proof}
As $\Cc_2\not\sqsubseteq\Dd$ and $\Cc_1\sqsubseteq\Dd$ we have $\Cc_2\not\sqsubseteq \Dd\cup\Cc_1$.
\end{proof}
\begin{corollary}
\label{cor:covR}
	If $\Cc_1\cover\Cc_2$, $\Cc_2$ and $\Dd$ are incomparable and $\Cc_2$ is additive, then $\Cc_1\cup\Dd\cover\Cc_2\cup\Dd$.
\end{corollary}
\subsection{The transduction semilattices $(\mathfrak A,\sqsubseteq_\FO^\circ)$ and $(\mathfrak A,\sqsubseteq_\FO)$}
The aim of this section is to prove that $(\mathfrak A,\sqsubseteq_\FO^\circ)$ and $(\mathfrak A,\sqsubseteq_\FO)$ are distributive join-semilattices and to state some of their properties.

\begin{lemma}
\label{lem:irred2}
 If $\Cc_1$ and $\Cc_2$ are incomparable, then $\Cc_1+\Cc_2$ is not essentially connected.
 \end{lemma}
 \begin{proof}
	We prove by contradiction that $\Cc_1+\Cc_2$ is not essentially connected.
	It is immediate that ${\rm Conn}(\Cc_1+\Cc_2)\subseteq\Cc_1\cup\Cc_2$. So if $\Cc_1+\Cc_2$ is essentially connected, then  $\Cc_1\cup \Cc_2$ and $\Cc_1+\Cc_2$ are equivalent, contradicting \cref{lem:irred1}.
 \end{proof}

\begin{lemma}
\label{lem:plus_sl}
A class $\Dd$ is additive if and only if 
	 for all classes $\Cc_1, \Cc_2$ we have
	\[
	\Cc_1+\Cc_2\sqsubseteq\Dd\qquad\iff\quad\Cc_1\sqsubseteq\Dd\text{ and }\Cc_2\sqsubseteq\Dd.
\] 
\end{lemma}
\begin{proof}
Assume $\Dd$ is additive.
	If $\Cc_1+\Cc_2\sqsubseteq\Dd$, then $\Cc_1\cup\Cc_2\sqsubseteq\Dd$ thus 
	$\Cc_1\sqsubseteq\Dd$ and $\Cc_2\sqsubseteq\Dd$. Conversely, assume 
		 $\Cc_1\sqsubseteq\Dd$ and $\Cc_2\sqsubseteq\Dd$.
	Then $\Cc_1\cup \Cc_2\sqsubseteq\Dd$. Let $\mathsf T=\mathsf I\circ\Gamma_\Sigma\circ\mathsf C$ be a transduction such that $\Cc_1\cup\Cc_2\subseteq\mathsf T(\Dd)$, where $\mathsf I=(M(x),\phi(x,y))$ with $M\in\Sigma$, and where~$\mathsf C$ is either a copying operation if $\sqsubseteq$ is $\sqsubseteq_\FO$, or the identity mapping if $\sqsubseteq$ is $\sqsubseteq_\FO^\circ$.
	Let $\Sigma'$ be the signature obtained from $\Sigma$ by adding two unary predicates $A(x)$ and $B(x)$.
	We define $\phi_A(x,y)$ (resp. $\phi_B(x,y)$) by replacing in $\phi(x,y)$ the predicate $M$ by the predicate $A$ (resp. by the predicate $B$). Let $\phi'(x,y)=\bigl(A(x)\wedge A(y)\wedge \phi_A(x,y)\bigr)\vee \bigl(B(x)\wedge B(y)\wedge\phi_B(x,y)\bigr)$, let $\mathsf I'=(A(x)\vee B(x),\phi'(x,y))$, and let $\mathsf T=\mathsf I'\circ\Gamma_{\Sigma'}\circ\mathsf C$. Then it is easily checked that $\Cc_1+\Cc_2\subseteq\mathsf T'(\Dd)$.
Conversely, assume that for all classes $\Cc_1, \Cc_2$ we have $\Cc_1+\Cc_2\sqsubseteq\Dd\ \iff\ \Cc_1\sqsubseteq\Dd\text{ and }\Cc_2\sqsubseteq\Dd$. Then (by choosing $\Cc_1=\Cc_2=\Dd$) we deduce $\Dd+\Dd\equiv\Dd$.
\end{proof}

\begin{lemma}
\label{lem:distributive}
Assume $\Dd$ is additive and  $\Cc_1$ and $\Cc_2$ are incomparable.
If $\Dd\sqsubseteq\Cc_1+\Cc_2$, then there exist  classes $\Dd_1$ and $\Dd_2$ such that 
$\Dd\equiv\Dd_1+\Dd_2$, $\Dd_1\sqsubseteq\Cc_1$ and $\Dd_2\sqsubseteq\Cc_2$. Moreover, if~$\Cc_1$ and $\Cc_2$ are additive we can require that $\Dd_1$ and $\Dd_2$ are additive.
\end{lemma}
\begin{proof}
 According to \cref{crl:slunion} there exists a copy operation $\mathsf C$ (which reduces to the identity if $\sqsubseteq$ is $\sqsubseteq_\FO^\circ$) and an immersive transduction $\mathsf T_{\rm imm}$ such that $\Dd\subseteq\mathsf T_{\rm imm}\circ\mathsf C(\Cc_1+\Cc_2)$. Let $\mathsf I$ be the interpretation part of $\mathsf T_{\rm imm}$.
	Let $G\in\Dd$ and let $H^+$ be a coloring of $H=\mathsf C(K)$, with $K\in\mathsf \Cc_1+\Cc_2$ and $G=\mathsf I(H^+)$.
	As $\mathsf T_{\rm imm}$ is immersive, each connected component of $G$ comes from a connected component of $H^+$ hence from a connected component of $K$. By grouping the connected components used in $K$ by their origin ($\Cc_1$ or $\Cc_2$) we get that $G$ is the disjoint union of $G_1\in \mathsf T_{\rm imm}\circ C(K_1)$ and $G_2\in \mathsf T_{\rm imm}\circ C(K_2)$, where $K_1\in\Cc_1$ and $K_2\in\Cc_2$. 
	So $\Dd\sqsubseteq\Dd_1+\Dd_2$, where $\Dd_1\sqsubseteq\Cc_1$ and $\Dd_2\sqsubseteq\Cc_2$.
	Moreover, as obviously $\Dd_1\sqsubseteq\Dd$ and $\Dd_2\sqsubseteq\Dd$ we derive from \cref{lem:plus_sl} that we have $\Dd_1+\Dd_2\sqsubseteq\Dd$. Hence $\Dd\equiv\Dd_1+\Dd_2$.
	For $i=1,2$, if~$\Cc_i$ is additive, then we can assume that $\Dd_i$ is also additive.
		\end{proof}
\begin{corollary}
\label{cor:ec}
	If $\Dd$ is additive and essentially connected, then
	\[
	\Dd\sqsubseteq\Cc_1+\Cc_2\quad\iff\quad\Dd\sqsubseteq\Cc_1\text{ or }\Dd\sqsubseteq\Cc_2.
	\]
\end{corollary}
\begin{proof}
	According to \cref{lem:distributive} there exist $\Dd_1,\Dd_2$ with $\Dd\equiv\Dd_1+\Dd_2$, $\Dd_1\sqsubseteq\Cc_1$ and $\Dd_2\sqsubseteq\Cc_2$. However, as $\Dd$ is essentially connected, $\Dd_1$ and $\Dd_2$ cannot be incomparable. Thus $\Dd\sqsubseteq\Cc_1$ or $\Dd\sqsubseteq\Cc_2$.
\end{proof}

\begin{theorem}
\label{thm:slA}
	The quasiorder $(\mathfrak A,\sqsubseteq)$ is a distributive join-semilattice, where the join of $\Cc_1$ and $\Cc_2$ is $\Cc_1+\Cc_2$. In this quasiorder, essentially connected (additive) classes are join-irreducible. 	This quasiorder has a minimum $\Ee$ and a maximum $\Gg$.
\end{theorem}
\begin{proof}
	That $(\mathfrak A,\sqsubseteq)$ is a  join-semilattice follows from \cref{lem:plus_sl}; that it is distributive follows from \cref{lem:distributive}. The last statement follows from \cref{lem:irred2}.
\end{proof}

\begin{corollary}
	Assume that $\Cc_1$ and $\Cc_2$ are incomparable and additive, $\Dd$ is additive and essentially connected, $\Cc_1\sqsubseteq\Dd$, and $\Cc_2\sqsubseteq\Dd$. Then we have
\[
\Cc_1\cup\Cc_2\sqsubset\Cc_1+\Cc_2\sqsubset\Dd.
\]
\end{corollary}

\begin{corollary}
\label{cor:incomp}
	Assume that $\Cc_1$ and $\Cc_2$ are incomparable and additive, $\Dd$ is additive and essentially connected, $\Dd$ is incomparable with $\Cc_1$ and $\Cc_2\sqsubset\Dd$.
	Then $\Cc_1+\Cc_2$ is incomparable with $\Dd$.
\end{corollary}
\begin{proof}
	Assume for contradiction that $\Cc_1+\Cc_2\sqsubseteq\Dd$.
	According to \cref{thm:slA}, we have $\Cc_1\sqsubseteq\Dd$, contradicting the assumption that $\Dd$ is incomparable with $\Cc_1$.
	
	Assume for contradiction that $\Dd\sqsubseteq\Cc_1+\Cc_2$. 
	According to \cref{thm:slA} there exists (by distributivity) classes $\Dd_1$ and $\Dd_2$ with $\Dd_1\sqsubseteq\Cc_1$, $\Dd_2\sqsubseteq\Cc_2$, and $\Dd=\Dd_1+\Dd_2$. As $\Dd$ is essentially connected, according to \cref{thm:slA} it is join-irreducible. Thus $\Dd_1$ and $\Dd_2$ are comparable. 
	Thus  $\Dd\subseteq\Cc_1$ or $\Dd\subseteq\Cc_2$. The first case does not hold as $\Dd$ is incomparable with $\Cc_1$, and the second case does not hold as $\Cc_2\sqsubset\Dd$.
\end{proof}

%\begin{corollary}
%	We have $\Tt_2\cup\Pp\sqsubset_\FO\Tt_2+\Pp\sqsubset_\FO \PW_1$ and, for $n\geq 1$,
%	$\PW_n\cup\Tt_{n+2}\sqsubset_\FO \PW_n+\Tt_{n+2}\sqsubset_\FO\PW_{n+1}$.
%\end{corollary}

%We now state a lemma, which give a sufficient condition for a cover to give rise, by taking the joins with a third class, to a new cover.
Using the distributive join-semillatice structure of $(\mathfrak A,\sqsubseteq)$, the following corollaries follow from \cref{lem:slcover}.
\begin{corollary}
%\begin{lemma}
\label{lem:coverA}
 In the poset $(\mathfrak A,\sqsubseteq)$, if $(\Cc_1,\Cc_2)$ is a cover and $\Cc_2\not\sqsubseteq\Cc_1+\Dd$  then $(\Cc_1+\Dd$, $\Cc_2+\Dd)$ is a cover.
%\end{lemma}
\end{corollary}
%\begin{proof}
%Assume $\Ff$ is additive and $\Cc_1+\Dd\sqsubseteq\Ff\sqsubseteq\Cc_2+\Dd$. 
%According to \cref{lem:distributive} there exist additive classes $\Ff_1$ and $\Ff_2$ with $\Ff\equiv\Ff_1+\Ff_2$, $\Ff_1\sqsubseteq\Cc_2$ and $\Ff_2\sqsubseteq\Dd$. Thus $\Cc_1\sqsubseteq \Cc_1+\Ff_1\sqsubseteq \Cc_2$.
%As $(\Cc_1,\Cc_2)$ is a cover in $(\mathfrak A,\sqsubseteq)$, either $\Cc_1\equiv \Cc_1+\Ff_1$ (i.e. $\Cc_1\equiv \Ff_1$)
% and thus $\Ff\equiv \Cc_1+\Dd$ or $ \Cc_1+\Ff_1\equiv \Cc_2$ 
%and then $\Cc_2+\Dd\sqsubseteq \Cc_1+\Ff_1+\Dd\sqsubseteq  \Cc_1+\Ff+\Dd\equiv \Ff\sqsubseteq \Cc_2+\Dd$ (i.e. $\Cc_2+\Dd\equiv \Ff$). Hence $\Cc_1+\Dd\equiv\Cc_2+\Dd$ or $\Cc_1+\Dd\cover\Cc_2+\Dd$. The first case cannot happen as $\Cc_2\not\sqsubseteq \Cc_1+\Dd$ by assumption.
%\end{proof}
\begin{corollary}
\label{cor:corcoverA}
	If $(\Cc_1,\Cc_2)$ is a cover of $(\mathfrak A,\sqsubseteq)$, $\Cc_1\sqsubseteq\Dd$, and $\Cc_2$ and $\Dd$ are incomparable, then $(\Dd,\Dd+\Cc_2)$ is a cover of $(\mathfrak A,\sqsubseteq)$. 
\end{corollary}
%\begin{proof}
%As $\Cc_2\not\subseteq\Dd$ and $\Cc_1\sqsubseteq\Dd$ we have $\Cc_2\not\sqsubseteq \Dd\cup\Cc_1$;
%as $\Cc_2$ is additive then $\Cc_2\not\sqsubseteq \Dd\cup\Cc_1$ implies $\Cc_2\not\sqsubseteq \Dd+\Cc_1$.
%\end{proof}
\begin{corollary}
\label{cor:covRA}
	If $(\Cc_1,\Cc_2)$ is a cover of $(\mathfrak A,\sqsubseteq)$, $\Cc_2$ and $\Dd$ are incomparable, and $\Cc_2$ is essentially connected, then
	$(\Dd,\Dd+\Cc_2)$ is a cover of $(\mathfrak A,\sqsubseteq)$.
\end{corollary}

\section{The transduction quasiorder on some classes}\label{sec:special}
In this section we consider the  poset $(\mathfrak C,\sqsubseteq_\FO)$.
We focus on the structure of the partial order in the region of classes with bounded tree-width.
A schematic view of the structure of $(\mathfrak C,\sqsubseteq_\FO)$ on classes with tree-width at most $2$ is shown \cref{fig:TW2}

Recall that since {\MSO} collapses to {\FO} on colored trees of
bounded depth we have the following chain of covers
$\Ee\cover_\FO
\Tt_2\cover_\FO\Tt_3\cover_\FO\ldots$. We first prove
that parallel to this chain we have a chain of covers
$\Ee\cover_\FO\Pp\cover_\FO\Pp\cup\Tt_2\cover_\FO\Pp\cup\Tt_3\cover_\FO\dots$
and for all $n\geq 1$ we have $\Tt_n\cover_\FO\Pp\cup\Tt_n$.

%\begin{restatable}{lemmax}{lemEE}
%%\begin{lemma}
%\label{lem:Ee}
%	A class $\Cc$ is a transduction in $\Ee$ (or, equivalently, $\Cc\equiv_\FO\Ee$) if and only if $\Cc$ is a perturbation of a class whose members have uniformly bounded size connected components. Furthermore, a class $\Cc$ is a non-copying transduction in $\Ee$ if and only if $\Cc$ is a perturbation of $\Ee$.
%%\end{lemma}
%\end{restatable}

\begin{restatable}[\blah]{theorem}{covers}
%\begin{theorem}
\label{thm:covers}
We have $\Ee\cover_\FO\Pp$ and, for every $n\geq 1$, the chain of covers
%The following classes form a chain of
%covers
%\[\Ee\cover_\FO\Pp\cover_\FO\Pp\cup\Tt_2\cover_\FO\Pp\cup\Tt_3\cover_\FO\dots\]
\[(\Pp+\Tt_n)\cover_\FO (\Pp+\Tt_n)\cup\Tt_{n+1}\cover_\FO(\Pp+\Tt_n)\cup\Tt_{n+2}\cover_\FO\dots\]
In particular, for $n=1$ we get
$\Pp\cover_\FO \Pp\cup\Tt_{2}\cover_\FO\Pp\cup\Tt_{3}\cover_\FO\dots$

\noindent Moreover, for all $n\geq 1$ we have $\Tt_n\cover_\FO\Pp\cup\Tt_n$.
%\end{theorem}
\end{restatable}

The difficult part of the next theorem is to prove that $\Tt_{n+2}\not\sqsubseteq_\FO\PW_n$. We use that the
class $\Tt_{n+2}$ is additive, which by \cref{crl:slunion}
implies that we can eliminate perturbations and focus on immersive
transductions.
This allows us to consider host graphs in $\PW_n$ that have
bounded radius, where we can find a small set of vertices whose
removal decreases the pathwidth. We encode the adjacency 
to these vertices by colors and proceed by induction. 

\begin{restatable}[\blah]{theorem}{thmPW}
%\begin{theorem}
\label{thm:PW}
  For $n\geq 1$ we have $\Tt_{n+1}\sqsubset_\FO\PW_n$ but
  \mbox{$\Tt_{n+2}\not\sqsubseteq_\FO\PW_n$}.
%  Consequently, $\PW_n\cover_\FO \PW_n\cup \Tt_{n+2}\sqsubset_\FO \PW_n+\Tt_{n+2}\sqsubset_\FO \PW_{n+1}$.
  Consequently, for $m>n\geq 1$ we have
  
  \begin{align*}
  \Tt_m+\PW_n\cover_\FO (\Tt_m+\PW_n)\cup \Tt_{m+1}
&  \cover_\FO (\Tt_m+\PW_n)\cup \Tt_{m+2}\cover_\FO \dots\\
 \Tt_m+\PW_n\cover_\FO (\Tt_m+\PW_n)\cup \Tt_{m+1}&\sqsubset_\FO\Tt_{m+1}+\PW_n	
  \end{align*}
%  \[\Tt_m+\PW_n\cover_\FO (\Tt_m+\PW_n)\cup \Tt_{m+1}
%\begin{array}{l}
%  \cover_\FO (\Tt_m+\PW_n)\cup \Tt_{m+2}\cover_\FO \dots\\
%\sqsubset_\FO\Tt_{m+1}+\PW_n	
%\end{array}  
%  \]
  In particular, fixing $m=n+1$ we get that for $n\geq 1$ we have
  \begin{align*}
\PW_n\cover_\FO \PW_n\cup \Tt_{n+2}&\cover_\FO \PW_n\cup \Tt_{n+3}\cover_\FO \dots\sqsubset_\FO\PW_{n}\cup\Tt\\
\PW_n\cover_\FO \PW_n\cup \Tt_{n+2}&\sqsubset_\FO\Tt_{n+2}+\PW_n\sqsubset_\FO\PW_{n+1}	
  	  \end{align*}
%
%\[
%\PW_n\cover_\FO \PW_n\cup \Tt_{n+2}
%\begin{array}{l}
%\cover_\FO \PW_n\cup \Tt_{n+3}\cover_\FO \dots\sqsubset_\FO\PW_{n}\cup\Tt\\
%\sqsubset_\FO\Tt_{n+2}+\PW_n\sqsubset_\FO\PW_{n+1}	
%\end{array}
%\]
%\end{theorem}
\end{restatable}

\begin{restatable}[\blah]{theorem}{PWTW}
%\begin{theorem}
\label{thm:PWTW}
For $m>n\geq 2$ , $\Tt_m+\PW_n$ is incomparable with~$\Tt$.
Consequently, we have 
\[\Tt\sqsubset_\FO\Tt\cup\PW_2\sqsubset_\FO\Tt\cup(\Tt_4+\PW_2)\sqsubset_\FO\dots\sqsubset_\FO\Tt+\PW_2\sqsubset_\FO\TW_2.\]
\end{restatable}

With the above results in hand we obtain for $(\mathfrak C,\sqsubseteq_\FO)$ and $(\mathfrak A,\sqsubseteq_\FO)$ the structures sketched in \cref{fig:TW2}.

\begin{figure}[h!t]
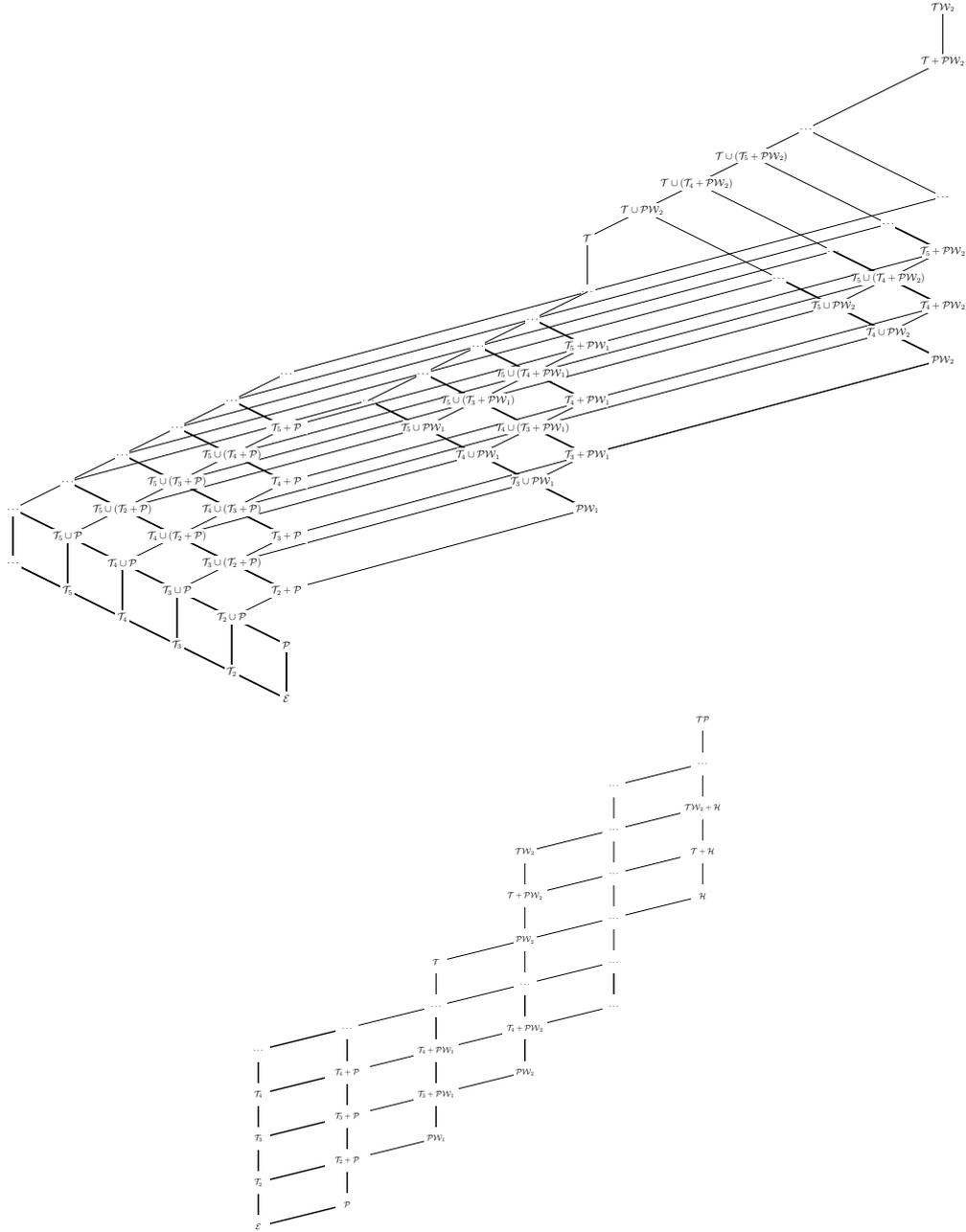

\begin{center}
	\includegraphics[height=.44\textheight]{arbo}	\\
		\includegraphics[height=.33\textheight]{arbo2}
\end{center}
	\caption{A fragment of $(\mathfrak C,\sqsubseteq_\FO)$ (top) and a fragment of $(\mathfrak A,\sqsubseteq_\FO)$ (bottom). Thick edges are covers.}
	\label{fig:TW2}
\end{figure}

\clearpage
\bibliography{ref}
\clearpage

\pagebreak
\appendix
\section{Missing proofs from \cref{sec:special}}
\begin{lemma}
\label{lem:Ee}
	A class $\Cc$ is a transduction in $\Ee$ (or, equivalently, $\Cc\equiv_\FO\Ee$) if and only if $\Cc$ is a perturbation of a class whose members have uniformly bounded size connected components. Furthermore, a class $\Cc$ is a non-copying transduction in $\Ee$ if and only if $\Cc$ is a perturbation of $\Ee$.
\end{lemma}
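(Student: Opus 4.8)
The plan is to derive all four implications from the transduction normal form (\cref{thm:normal}) together with two elementary structural observations about $\Ee$: first, that applying a copy operation to edgeless graphs yields graphs whose connected components have uniformly bounded size; and second, that an immersive (strongly local) transduction can only shrink connected components and never merges distinct ones.

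For the two ``only if'' directions I would argue via the normal form. Suppose $\Cc\sqsubseteq_\FO\Ee$. By \cref{thm:normal} the transduction onto $\Cc$ is subsumed by $\mathsf P\circ\mathsf T_{\mathrm{imm}}\circ\mathsf C$ with $\mathsf C$ a copy operation, $\mathsf T_{\mathrm{imm}}$ immersive, and $\mathsf P$ a perturbation. Now $\mathsf C_k$ applied to the edgeless graph on $n$ vertices is a disjoint union of $n$ copies of $K_k$, so $\mathsf C(\Ee)$ is a class whose components have size at most $k$. Because $\mathsf T_{\mathrm{imm}}$ is strongly local, say with radius $r$, any edge it creates joins two vertices at distance $\leq r$, hence lying in the same connected component of the input; together with the fact that its domain formula only deletes vertices, this shows that $\Dd\coloneqq\mathsf T_{\mathrm{imm}}(\mathsf C(\Ee))$ still has components of size at most $k$. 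Since $\Cc\subseteq\mathsf P(\Dd)$, the class $\Cc$ is a perturbation of the bounded-component class $\Dd$, which is exactly the claim. The non-copying case is even simpler: no copy operation appears, and $\mathsf T_{\mathrm{imm}}(\Ee)\subseteq\Ee$, because in an edgeless graph any two distinct vertices are at infinite distance, so the strongly local, anti-reflexive adjacency formula can never hold; thus $\Cc\subseteq\mathsf P(\Ee)$ is a perturbation of $\Ee$.

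For the ``if'' directions, since perturbations are non-copying transductions, a perturbation of $\Ee$ is directly a non-copying transduction of $\Ee$ (settling part~2), while a perturbation of a bounded-component class $\Dd$ satisfies $\Cc\sqsubseteq_\FO\Dd$; it thus remains to show $\Dd\sqsubseteq_\FO\Ee$ for every class $\Dd$ of uniformly bounded component size. To realize a graph $G\in\Dd$ whose components have size $\leq b$, I would start from the edgeless graph with one vertex per component of $G$, apply $\mathsf C_b$ to turn each such vertex into a clique $K_b$ of $b$ ``slots'', and then apply a non-copying transduction that first recolors the copied graph, assigning slot-labels $1,\dots,b$ within each clique and a label naming the isomorphism type of the target component, and then interprets the edges of that component from these labels while deleting surplus slots. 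As there are only finitely many connected graphs on at most $b$ vertices, finitely many labels suffice, so this is a genuine transduction and $\Dd\sqsubseteq_\FO\Ee$. Finally, the parenthetical equivalence $\Cc\sqsubseteq_\FO\Ee\Leftrightarrow\Cc\equiv_\FO\Ee$ holds because $\Ee$ is the least element: any infinite class transduces arbitrarily large edgeless graphs via the empty adjacency formula $\eta\coloneqq\bot$.

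The crucial but routine steps are the two structural observations about $\Ee$; the one place requiring genuine care is the backward direction of part~1, namely the reconstruction of an arbitrary bounded-component graph from an edgeless graph. The subtle point there is that the symmetry among the $b$ vertices of each clique $K_b$ produced by $\mathsf C_b$ must be broken by a coloring applied \emph{after} the copying, which is legitimate precisely because a transduction only asks for the existence of a suitable coloring. I expect this symmetry-breaking to be the main (if modest) obstacle; every other step is an immediate consequence of the normal form.
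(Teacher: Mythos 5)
Your proposal is correct and follows essentially the same route as the paper: both directions are driven by the normal form $\mathsf P\circ\mathsf T_{\rm imm}\circ\mathsf C$, the observation that $\mathsf C_k(\Ee)$ has components of size at most $k$ and that strong locality preserves this bound, and the explicit reconstruction of a bounded-component class from $\Ee$ via copying plus recoloring (which the paper dismisses as obvious but you spell out). The non-copying case is handled identically, noting that an immersive transduction applied to edgeless graphs can create no edges.
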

\begin{proof}
	Assume that $\Cc$ is a perturbation of a class $\Dd$ and that all graphs in $\Dd$ have connected components of size at most $n$. Then $\Dd$ can obviously be obtained from $\Ee$ by the composition of an $n$-copy transduction composed with some (immersive) transduction. 
	Conversely, if $\Cc$ is a transduction of $\Ee$ then, according to \cref{thm:normal}, $\Cc$ is a perturbation of a class $\Dd\subseteq\mathsf T_{\rm imm}\circ \mathsf C(\Ee)$, where $\mathsf T_{\rm imm}$ is immersive and $\mathsf C$ is a $k$-copy transduction for some number $k$. By the strong locality of the interpretation associated to $\mathsf T_{\rm imm}$ no connected component of a graph in $\Dd$ can have size greater than $k$.
	The last statement is obvious, as every immersive transduction of $\Ee$ is included in $\Ee$.
\end{proof}

\covers*
\begin{proof}
Obviously $\Ee\sqsubset_\FO\Pp$. Assume towards a contradiction that 
there exists a class $\Cc$ with $\Ee\sqsubset_\FO\Cc\sqsubset_\FO\Pp$.
As in the first part of the proof of \cref{thm:transpath}, we see that 
the class~$\Cc$ is a perturbation of a class $\Dd$ with bounded bandwidth.  
The connected components of the graphs in $\Dd$ have unbounded size as otherwise $\Dd$ (and thus $\Cc$) can be transduced in $\Ee$ (according to \cref{lem:Ee}). 
 But then $\Cc\equiv_\FO \Pp$ by \cref{thm:transpath}, 
contradicting our assumption. 
 Hence~$\Ee\cover_\FO\Pp$. 
 
		Let $m\geq n\geq 1$. We have $(\Pp+\Tt_n)\not\sqsubseteq_\FO\Tt_{m+1}$ and 
		$\Tt_{m+1}\not\sqsubseteq_\FO(\Pp+\Tt_n)\cup\Tt_m$ as otherwise, according to \cref{cl:union} we would have $\Tt_{m+1}\sqsubseteq_\FO\Pp+\Tt_n$ or $\Tt_{m+1}\sqsubseteq_\FO\Tt_m$ as $\Tt_{m+1}$ is additive. As $\Tt_{m+1}$ is additive and essentially connected $\Tt_{m+1}\sqsubseteq_\FO\Pp+\Tt_n$ would imply $\Tt_{m+1}\sqsubseteq_\FO\Pp$ or $\Tt_{m+1}\sqsubseteq_\FO\Tt_n$.
		According to \cref{lem:cover} we have 	$(\Pp+\Tt_n)\cup\Tt_m\cover_\FO(\Pp+\Tt_n)\cup\Tt_{m+1}$. (Note that $\Tt_1=\Ee$ and thus $\Pp\cup\Tt_1=\Pp$, and that $(\Pp+\Tt_n)\cup\Tt_n=\Pp+\Tt_n$.)
		
Let $n\geq 1$.
		According to \cref{cor:corcover}, as $\Ee\cover_\FO\Pp$,  $\Ee\sqsubseteq_\FO\Tt_n$, and $\Pp$ is incomparable with $\Tt_n$ we have $\Tt_n\cover_\FO\Pp\cup\Tt_n$. 
\end{proof}
\thmPW*
\begin{proof}
For convenience, we define $\Tt_1=\PW_0=\Ee$, which is consistent with our definitions.
Then $\Tt_{n}\sqsubseteq_\FO\PW_{n-1}$ as for $n=1$ this reduces to $\Ee\sqsubseteq_\FO\Ee$ and, if $n>1$, $\Tt_{n}\sqsubset_\FO\Pp\cup \Tt_{n}\subset \PW_{n-1}$. We now prove $\Tt_{n+1}\not\sqsubseteq_\FO\PW_{n-1}$
		 by induction on $n$. For $n=1$, this follows from \cref{lem:Ee} (or from the known fact $\Tt_2\sqsupset\Ee$). 
		 Now assume that we have proved the statement for~$n\geq 1$ and assume towards a contradiction that $\Tt_{n+2}\sqsubseteq_\FO\PW_{n}$.
		The class~$\PW_{n}$ is the monotone closure of the class $\mathcal{I}_{n+1}$ of interval graphs with clique number at most~$n+1$. As the class~$\mathcal{I}_{n+1}$ has bounded star chromatic number, it follows from \cref{lem:monotone} that $\PW_{n}\sqsubseteq_\FO\mathcal{I}_{n+1}$. Let~$\mathsf T$ be the composition of the respective transductions, so that  $\Tt_{n+2}\subseteq\mathsf T(\mathcal{I}_{n+1})$.
		As $\Tt_{n+3}$ is additive, 
		it follows from \cref{crl:slunion} that there is a copy operation $\mathsf C$ and an immersive transduction $\mathsf{T}_0$ such that $\mathsf T_0\circ \mathsf C$ subsumes $\mathsf T$.
		As $\{G+K_1: G\in\Tt_{n+1}\}\subseteq \Tt_{n+2}$ if follows from \cref{cor:apex} that there exists an integer $r$ such that $\Tt_{n+1}\sqsubseteq^\circ_\FO \rloc{r}{\mathsf C(\mathcal I_{n+1})}\subseteq \mathsf C(\rloc{r}{\mathcal I_{n+1}})$. According to \cref{lem:slunion}, as $\Tt_{n+1}$ is additive, there exists an immersive transduction $\mathsf{T}_1$ with $\Tt_{n+1}\subseteq \mathsf T_1\circ\mathsf C(\rloc{r}{\mathcal I_{n+1}})$.
		
		Let $G\in \rloc{r}{\mathcal I_{n+1}}$ and let $P$ be a shortest path connecting the 
minimal and maximal vertices in an interval representation of $G$. Then $P$ has length at most $2r$, $P$ dominates $G$, and  $G-P\in\mathcal I_n$. 
		By encoding the adjacencies in~$G$ to the vertices of $P$ by a monadic expansion, we get that that there exists a transduction~$\mathsf T_2$ (independent of our choice of $G$) such that $G$ is a $\mathsf T_2$-transduction of $H$, where $H$ is obtained from $G$ by deleting all the edges incident to a  vertex in $P$. 
		In particular, $\rloc{r}{\mathcal I_{n+1}}\subseteq\mathsf T_2(\mathcal I_n)$ thus
		$\Tt_{n+1}\subseteq\mathsf T_1\circ\mathsf C\circ\mathsf T_2(\mathcal I_n)$, which contradicts our induction hypothesis.

Let us now prove the cover chain, that is that for $k\geq m>n\geq 1$ we have
$(\Tt_m+\PW_n)\cup\Tt_k\cover_\FO (\Tt_m+\PW_n)\cup \Tt_{k+1}$. To deduce this from the cover
$\Tt_k\cover_\FO\Tt_{k+1}$ using \cref{lem:cover} we need to check
$(\Tt_m+\PW_n)\not\sqsubseteq_\FO \Tt_{k+1}\not\sqsubseteq_\FO (\Tt_m+\PW_n)\cup\Tt_k$.
According to \cref{lem:plus_sl}, as $(\Tt_m+\PW_n)$ and $\Tt_{k+1}$ are both additive,
$(\Tt_m+\PW_n)\sqsubseteq_\FO \Tt_{k+1}$ is equivalent to $\Tt_m\sqsubseteq_\FO  \Tt_{k+1}$  and
$\PW_n\sqsubseteq_\FO  \Tt_{k+1}$. However, $\PW_n\not\sqsubseteq_\FO  \Tt_{k+1}$ as $\Pp\sqsubseteq_\FO\PW_n$ and $\Pp$ is incomparable with $\Tt_{k+1}$. Thus $(\Tt_m+\PW_n)\not\sqsubseteq_\FO \Tt_{k+1}$.
Now assume for contradiction that we have $\Tt_{k+1}\sqsubseteq_\FO (\Tt_m+\PW_n)\cup\Tt_k$.
As $\Tt_{k+1}$ is additive, it follows from \cref{cl:union} that we have
$\Tt_{k+1}\sqsubseteq_\FO (\Tt_m+\PW_n)$ (as $\Tt_{k+1}\not\sqsubseteq_\FO Tt_k$).
As $\Tt_{k+1}$ is additive and essentially connected, according to \cref{cor:ec} we deduce that  $\Tt_{k+1}\sqsubseteq_\FO \Tt_m$ or $\Tt_{k+1}\sqsubseteq_\FO\PW_n$. The first case does not hold as $k+1>m$. The second case does not hold as $k+1\geq n+2$ and $\Tt_{n+2}\not\sqsubseteq_\FO\PW_n$.
\end{proof}

\PWTW*
\begin{proof}
	We establish that $\PW_2\not\sqsubseteq_\FO\Tt$, from which $\TW_2\not\sqsubseteq_\FO \Tt$ immediately follows.
	Assume for contradiction that $\PW_2\sqsubseteq_\FO\Tt$. 
As $\PW_2$ is additive and does not need copying (\cref{fact:non-copy-classes}), by \cref{crl:slunion} there is an immersive transduction of $\PW_2$ in $\Tt$. In particular, there exists an integer $r$ such that the class $\{K_1\join P_n: n\in\mathbb N\}$ is the transduction of $\Tt_r$, which contradicts the property 
		$\Pp\not\sqsubseteq_\FO\Tt_r$. Thus  $\PW_2\not\sqsubseteq_\FO\Tt$.

We prove that the classes $\PW_n$ ($n\geq 2$) are incomparable with $\Tt$.
The class $\Tt$ has unbounded linear cliquewidth, hence $\Tt\not\sqsubseteq_\FO\Hh$, and thus \mbox{$\Tt\not\sqsubseteq_\FO\PW_n$} for every integer $n$ (recall that a class $\Cc$ has bounded linear cliquewidth if and only if $\Cc\sqsubseteq_\FO \Hh$). As $\PW_2\not\sqsubseteq_\FO\Tt$ it follows that,  for $n\geq 2$,  $\PW_n\not\sqsubseteq_\FO\Tt$, thus $\PW_n$ and~$\Tt$ are incomparable. 
	(This settles the case where $m=n+1$ as $\Tt_{n+1}+\PW_n=\PW_n$.)
For $m>n+1$ the classes $\Tt_m$ and $\PW_n$ are additive and incomparable and $\Tt_m\sqsubset_\FO\Tt$. As 
$\Tt$ is additive, essentially connected and incomparable with $\PW_n$ we deduce by \cref{cor:incomp} that
$\Tt_m+\PW_n$ is incomparable with $\Tt$.
\end{proof}

\section{Transduction of the monotone closure in a class with bounded star coloring number}
\label{sec:monotone}
Recall that a {\em star coloring} of a graph $G$ is  a proper coloring of $V(G)$ such that any two color classes induce a star forest (or, equivalently, such that no path of length four is $2$-colored), and that the the star chromatic number $\chi_{\rm st}(G)$ of $G$ is the minimum number of colors in a star coloring of $G$~\cite{alon2}. The star chromatic number is bounded on classes of graphs excluding a minor~\cite{Taxi_jcolor} and, more generally, on every bounded expansion class~\cite{nevsetvril2008grad}.

\begin{lemma}
\label{lem:monotone}
Let $\Cc$ be a class with bounded star chromatic number. Then there is an immersive transduction from $\Cc$ onto its monotone closure.
\end{lemma}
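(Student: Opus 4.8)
The plan is to use a fixed star coloring of each $G\in\Cc$ as the scaffolding for the markings, and to exploit the fact that between any two color classes the edges form a star forest whose edges can be indexed \emph{locally and injectively} by their endpoints. Let $k$ bound $\chi_{\rm st}(G)$ over $G\in\Cc$. Given $G\in\Cc$, I would fix a star coloring $c\colon V(G)\to[k]$ and record it by unary predicates $C_1,\dots,C_k$. For $i\ne j$, let $F_{ij}$ be the bipartite graph formed by the edges of $G$ with one endpoint in $C_i$ and the other in $C_j$; by definition of a star coloring each $F_{ij}$ is a star forest. Note that the predicates $C_1,\dots,C_k$ are fixed across all of $\Cc$ (only the interpretation of the coloring depends on $G$), so this stays within the budget of a single transduction.

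The heart of the argument is a \emph{tail lemma}. I would show that every edge $e=\{u,v\}$ of $F_{ij}$ has a canonical tail, the unique endpoint that is a leaf of its star in $F_{ij}$, with the tie (when $e$ is an isolated edge of $F_{ij}$, so both endpoints are leaves) broken by declaring the endpoint in the smaller color class to be the tail. The key point making this well defined is that no edge of $F_{ij}$ can have both endpoints of $F_{ij}$-degree at least two: otherwise choosing a further $F_{ij}$-neighbor of each endpoint would yield a path on four vertices colored only with $i$ and $j$, contradicting the star-coloring hypothesis. Since a leaf is incident to exactly one edge of $F_{ij}$, the map $e\mapsto\mathrm{tail}(e)$ is injective on $E(F_{ij})$, and ``$x$ is the tail of the $F_{ij}$-edge $\{x,y\}$'' is expressible by a strongly $1$-local formula $\tau_{ij}(x,y)$: it only inspects the colors of $x,y$ and counts the neighbors of $x$ and of $y$ in the relevant color class, all within distance $1$ of the edge.

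Next I would set up the interpretation. Add a predicate $U$ marking surviving vertices, and for each pair $i<j$ a predicate $M_{ij}$. To realize a subgraph $H$ of $G$ (obtained by deleting some vertices and edges), put a vertex into $U$ iff it survives in $H$, and set $M_{ij}(x)$ true exactly when $x$ is the tail of an edge of $F_{ij}$ that is kept in $H$; by injectivity of the tail map each kept edge is recorded by marking one vertex, and the marks do not interfere. Define the adjacency by
\[ \eta(x,y) \;:=\; U(x)\wedge U(y)\wedge E(x,y)\wedge \bigvee_{i<j}\big((\tau_{ij}(x,y)\wedge M_{ij}(x))\vee(\tau_{ij}(y,x)\wedge M_{ij}(y))\big). \]
Since each edge has a unique tail, exactly one of $\tau_{ij}(x,y),\tau_{ij}(y,x)$ can hold, so $\eta$ is symmetric and selects precisely the edges of $H$. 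As $\eta$ implies $E(x,y)$ it is $1$-local and forces $\dist(x,y)\le 1$, hence strongly $1$-local, so the resulting transduction is non-copying and immersive. Every subgraph of every $G\in\Cc$ is produced, and every produced graph is a subgraph of some $G\in\Cc$, so the transduction is onto the monotone closure of $\Cc$; the number of predicates used is $k+\binom{k}{2}+1$, which is bounded because $\chi_{\rm st}$ is bounded on $\Cc$.

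The main obstacle is precisely the tail lemma: showing that in each $F_{ij}$ the tail of an edge is simultaneously canonical, locally definable, and an injective index of the edges. This is exactly where the star-coloring hypothesis is needed over mere properness, via the absence of a $2$-colored path on four vertices. Once the tails behave like an injective local index of the edges, the remaining bookkeeping (symmetry of $\eta$, strong locality, bounded number of marks) is routine and turns arbitrary edge deletion into a strongly local, bounded-color transduction.
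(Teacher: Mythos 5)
Your proof is correct, and it reaches the same destination by a noticeably different mechanism than the paper. The paper also fixes a star coloring $\gamma$ with predicates for the color classes, but it encodes the surviving edge set ``lossily'': a predicate $N_i$ marks the vertices having a \emph{kept} neighbor of color $i$, and the edge formula simply keeps $\{x,y\}$ when each endpoint has a kept neighbor of the other endpoint's color. A priori this could introduce spurious edges, and the whole content of the paper's proof is that a spurious edge $\{x,y\}$ would produce witnesses $x',y'$ forming a $2$-colored path $x',y,x,y'$, contradicting the star coloring; so the encoding is lossless after all, with $2C+1$ predicates and an essentially quantifier-free adjacency formula. You instead make the encoding injective from the start via the tail lemma: in each bipartite star forest $F_{ij}$ no edge has both endpoints of degree $\ge 2$ (same $P_4$ obstruction, used at a different point), so each edge has a canonical leaf endpoint that indexes it uniquely, and marking tails of kept edges records the subgraph exactly. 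The trade-off is that your adjacency formula must count neighbors in color classes to identify tails (still strongly $1$-local, so immersiveness is unaffected) and you use $k+\binom{k}{2}+1$ predicates rather than $2C+1$; in exchange you get a transparent bijection between kept edges and marks rather than a correctness argument by contradiction. One small point to pin down: your tie-break for isolated edges of $F_{ij}$ should be read as ``the endpoint in the class of smaller \emph{index}'' (which is locally definable), not the class of smaller cardinality, which would not be. Both proofs are valid.
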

\begin{proof} 
	We consider the immersive transduction $\mathsf T$ with $\Sigma_{\mathsf T}=\{M_1,\dots,M_C,N_1,\dots,N_C,X\}$ and $\mathsf I_{\mathsf T}=(\nu(x),\eta(x,y))$, where
	 $C=\max_{G\in\Cc}\chi_{\rm st}(G)$, $\nu(x)=X(x)$, and 
\[
		\eta(x,y)=E(x,y)\wedge\Bigl(\bigvee_{i=1}^C M_i(x)\wedge N_i(y)\Bigr)\wedge \Bigl(\bigvee_{i=1}^C M_i(y)\wedge N_i(x)\Bigr).
\]
	
	Let us prove that $\mathsf T$ is a transduction from $\Cc$ onto its monotone closure.
	Let $H$ be a graph in the monotone closure of $\Cc$ and let $G\in\Cc$ be a supergraph of $H$
	(we identify $H$ with a subgraph of $G$). We consider an arbitrary star coloring $\gamma\colon V(G)\rightarrow [C]$ and define the $\Sigma_{\mathsf T}$-expansion $G^+$ of $G$ as follows:
	\begin{align*}
		X(G)&=V(H),\\
		M_i(G)&=\{v\in V(G)\mid \gamma(v)=i\}&\text{(for $1\leq i\leq C$),}\\
		N_i(G)&=\{v\in V(G)\mid (\exists u\in N_H(v))\ \gamma(u)=i\}&\text{(for $1\leq i\leq C$).}
	\end{align*}
	
	According to the definitions of $\nu$ and as $\models\eta(x,y)\rightarrow E(x,y)$, it is clear that $\mathsf{I}_{\mathsf T}(G^+)$ is a subgraph of $G[V(H)]$. According to our definitions of $M_i$ and $N_i$, it is also clear that~$H$ is a subgraph of $\mathsf{I}_{\mathsf T}(G^+)$. Thus, in order to prove that $H=\mathsf{I}_{\mathsf T}(G^+)$, we have only to prove that~$\mathsf{I}_{\mathsf T}(G^+)$ contains no more edges than $H$. Assume towards a contradiction that $u,v\in V(H)$ are adjacent in $\mathsf{I}_{\mathsf T}(G^+)$ and not in $H$. According to the definition of $M_i$ and $N_i$ it follows that $u$ has a neighbor $v'$ in $H$ with $\gamma(v')=\gamma(v)$ and that  $v$ has a neighbor $u'$ in~$H$ with $\gamma(u')=\gamma(u)$. It follows that the path $v',u,v,u'$ of $G$ is $2$-colored by $\gamma$, contradicting the hypothesis that $\gamma$ is a star coloring.
\end{proof}

\end{document}